%%Om Namo Vigneshwaraya;Om NAmo bagavathe Vasudevaya; Om Namah Shivaiah; Jai matha di; Yadevi sarva bhutanam mathru rupena samstitha namastasmeye namasathamsamaye namasthasmai namamyaham.
\documentclass[final,leqno,onefignum,onetabnum]{siamltex1213}
\usepackage{algorithm,amsmath,graphicx,epstopdf,graphics,epsf}
\usepackage{caption,float,multicol,bm,amssymb}
\usepackage[export]{adjustbox}
\begin{document}
\newtheorem{thm}{Theorem}
\newtheorem{defn}{Definition}
\newtheorem{cor}{Corollary}
\newtheorem{eg}{Example}
\newtheorem{lem}{Lemma}
\def\ni{\noindent}
\def\vs{\vspace}
\setlength{\belowcaptionskip}{-15pt}

\title{On residual norms in the Rayleigh-Ritz and refined projection methods \thanks{This work is supported by the National Board of Higher Mathematics, India under Grant number 02/40(3)/2016. }}

\author{Mashetti Ravibabu \thanks{Department of Computational and Data Sciences, Indian Institute of Science,
Bangalore,  India, 560012. (\email{mashettiravibabu2@gmail.com}).} }

\maketitle

\slugger{sisc}{xxxx}{xx}{x}{x--x}%slugger should be set to mms, siap, sicomp, sicon, sidma, sima, simax, sinum, siopt, sisc, or sirev

\begin{abstract}
This paper derives bounds for the ratio of residual norms in the refined and  Rayleigh-Ritz projection methods.
% for solving linear eigenvalue problems. 
To do this, it uses the Least squares and line search projection method proposed in \cite{Mallannagift}. 
%Unlike the bounds appeared in the literature, 
The bound derived in this paper is less costly to compute. Further, it is practically useful to assess the superiority of the refined and the Rayleigh-Ritz projection methods one over the other. 
\end{abstract}

\begin{keywords}
Eigenvalues and eigenvectors, Refined Rayleigh-Ritz,
Rayleigh-Ritz, Least squares, Line search technique.\end{keywords}

\begin{AMS}63F15\end{AMS}

\pagestyle{myheadings} \thispagestyle{plain} \markboth{On residual norms in the Rayleigh-Ritz and the refined projection methods}{Mashetti Ravibabu}

%$$\tau-1 =(\tau-\alpha_7)g(\tau+\alpha_6/2)-\tau/\tau-\alpha_7$$

\section{Introduction}
Projection methods are quite familiar to solve large sparse eigenvalue problems.  These methods produce eigenpair approximations using  either oblique or orthogonal projections onto a specifically chosen vector space.
Depending on a chosen vector space these methods are classified as Krylov subspace methods and Jacobi-Davidson type methods. The Lanczos method for symmetric matrices and the Arnoldi method for non-symmetric matrices are well-known and come under the category of Krylov subspace methods. Similar to Lanczos and Arnoldi methods, Jacobi-Davidson method also starts with an arbitrarily chosen unit vector called the Initial Vector.  Then at each iteration,  it extends an existing vector space using the solution of a  system of linear equations called the Correction Equation. The correction equation varies depending on the procedure chosen for extracting eigenpair approximations from a vector space. 
% 
% Projection methods are quite familiar for solving large sparse eigenvalue problems. As the name suggests, these methods use either oblique or orthogonal projections onto a vector space chosen specifically, to produce eigenpair approximations. Basing on a vector space chosen these methods are classified as Krylov subspace methods and Jacobi-Davidson type methods.
% 
% The well-known  Krylov subspace methods are the Lanczos method for symmetric matrices and the Arnoldi method for non-symmetric matrices. Similar to Lanczos and Arnoldi methods, Jacobi-Davidson method also starts with an arbitrarily chosen unit vector, called the ``Initial vector''.
%Then at each iteration, it uses  the  solution of a linear system of equations called the ``correction equation" to extend an existing vector space. The 
%%coefficient matrix in the 
%correction equation varies depending on the procedure chosen for extracting eigenpair approximations from a vector space.

The Rayleigh-Ritz projection is a well-known procedure for extracting eigenpair approximations and is inherent in these projection methods \cite{saad,ste}.  
The Rayleigh-Ritz projection produces good approximations to the eigenvalues in the exterior of the spectrum. To better approximate interior eigenvalues, It requires the inverse of a given matrix which is computationally more costly.  This problem resolved by using the Harmonic projection \cite{HP1}, but, as in the Rayleigh-Ritz projection, the eigenvector approximations produced by the Harmonic projection method also may not converge to an eigenvector, even though the corresponding eigenvalue approximations do converge \cite{jiac,HPrec}. This misconvergence problem is avoidable in the Refined projection method \cite{someshwaragift, feng}, and in the  Least squares and Line search technique (LLS). 

%Rayleigh-Ritz projection is a well known inherent procedure
%% to get an eigenpair approximation from the vector space chosen already             
%for extracting eigenpair approximations in the projection methods
%%from a vector space
%\cite{saad,ste}. It provides good approximations to those eigenvalues in the exterior of the spectrum. To produce better
%approximations to interior eigenvalues, it has to work with the inverse of a given matrix. But, this is
%computationally more costly. 

%This problem is resolved by using the 
%Harmonic projection  \cite{HP1}. 
%%It is a familiar method to approximate eigenvalues in the interior of the spectrum. 
%However, Like Rayleigh-Ritz projection, it also have a drawback that eigenvector
%approximations may not converge to an eigenvector,
%even though the corresponding eigenvalue approximations do converge
%\cite{jiac,HPrec}. The way to evite this misconvergence problem is to use either a
%Refined projection \cite{someshwaragift, feng} or Least squares and Line search technique (LLS). 

The refined projection method preserves an eigenvalue approximation that obtained using Rayleigh-Ritz projection. Then, it determines corresponding eigenvector approximation such that residual norm is minimum overall unit vectors in a vector space, from which an eigenvalue approximation sought. To find such an approximate eigenvector, it solves a singular value problem of smaller size. The LLS technique procures an approximate eigenpair from Rayleigh-Ritz projection. Then, it improves an eigenvector approximation in Rayleigh-Ritz projection by using least squares heuristics and line search technique \cite{Someshwaragift2,Mallannagift}.

%The refined projection method preserves an eigenvalue approximation
%that obtained using Rayleigh-Ritz projection. Then, it determines corresponding eigenvector approximation such that 
%%with an optimal property that 
%residual norm is minimum over all
%unit vectors in a vector space, from which an eigenvalue approximation sought. To find such an approximate eigenvector, it solves a singular value problem of smaller size.
%
%Similar to the refined projection, the LLS technique also procures an eigenvalue approximation from the Rayleigh-Ritz projection. In addition, it also procures an eigenvector approximation and improves it 
%by using least squares heuristics and line search technique.
%It has been demonstrated in \cite{Someshwaragift2,Mallannagift} that for the Arnoldi and the Jacobi-Davidson methods, LLS technique is more efficient than the refined projection.

It is a general belief that the residual norm in the refined projection method is too small compared to that in the Rayleigh-Ritz projection. However, it is still unknown how much smaller the former residual norm compared to the later. Although the answer for this will be quite useful to create robust and efficient eigensolvers, it is still unanswered as these two methods came from different perspectives, singular value problem, and eigenvalue problem respectively. This paper extinguishes this question by deriving bounds for the ratio of residual norms in Rayleigh-Ritz and refined projections.
%It is a general belief that residual norm in the refined projection method  is too small
%compared to residual norm of
%an eigenvector approximation in the Rayleigh-Ritz projection. In contrast, it is still unknown how smaller the former residual norm compared to the later. Even though, the answer for this will be quite useful to create robust and efficient eigensolvers, it is still unanswered as these two methods came from the different perspectives,  singular value problem and eigenvalue problem respectively. This paper extinguishes this question by deriving bounds for the ratio of residual norm in the Rayleigh-Ritz projection to  residual norm in the refined projection.
%%For this, we exploit an eigenvector approximation in the LLS method as a bridge to interlink those eigenvector approximations in the Rayleigh-Ritz and refined projections.

The paper is organized as follows; Section 2 briefly discusses the Rayleigh-Ritz projection, Refined projection, and LLS methods.  Then, Section 3 determines the upper and lower bounds for the concerned ratio of residual norms. Section 4 concludes the paper.
%This paper is organized as follows; Section~2 briefly discusses the Rayleigh-Ritz projection,
%refined projection and LLS methods. In Section~3, we derive the bounds for the ratio of a residual norm in the Rayleigh-Ritz projection to a residual norm in the refined projection.
%%, which is a concern of this paper.
%Section~4 concludes the paper.
\section{Rayleigh-Ritz, Refined Rayleigh-Ritz and LLS methods}
Let $A$ be a given matrix of order $n$ and ${\cal V}$ is a $k$ dimensional vector space. 
Suppose that column vectors of a matrix  $V = [v_1~ v_2~ \cdots v_k]$ form an orthonormal basis of ${\cal V}.$  Then to produce an approximate eigenpairs of $A,$ the Rayleigh-Ritz projection method 
solves an eigenvalue problem for the matrix $H:=V^\ast AV$ of order $k.$ In general, $k$ is small and this eigenvalue problem can be solved using classical methods such as the QR algorithm. 

As column vectors of $V$ are orthonormal,
%the orthonormality of the s of $V$ implies 
$V^\ast V =I.$ Further, an
eigenpair $(\theta_i,y_i)$ of $H$ satisfies the relation:
$V^\ast (A-\theta_i I)Vy_i =0.$
That means, approximations to
eigenpairs of $A$ produced by the Rayleigh-Ritz projection method satisfy the \textit{Galerkin condition:}
$$AVy_i -\theta_iVy_i \perp {\cal V}~~\mbox{for}~~ i = 1,2,\cdots ,k.$$
Equivalently, this can be written as follows:
$$AVy_i-\theta_iVy_i \perp v,\quad \forall v \in {\cal V},~i= 1,2 \cdots,k.$$
In general, the above equation is not a good indication on $Vy_i$ being an eigenvector approximation. It shows that $Vy_i$ is orthogonal to its corresponding 
%$\theta$ is an approximate eigenvalue and 
residual vector $AVy_i-\theta_iVy_i.$ It further shows that eigenvector approximations $Vy_i$ may not converge to an eigenvector of $A$, even though corresponding eigenvalue approximations  converge to an eigenvalue \cite{jiac}. %This problem arises especially, when small perturbations of a matrix $H$ produces a spurious eigenvalue close to the required eigenvalue. In this case an eigenvector approximation is ill-conditioned.
%\subsection{Refined projection}

The Refined projection method is a remedy for the mis-convergence problem of eigenvector approximations in Rayleigh- Ritz projection. An eigenvector approximation $u_{R_i}$ in the Refined projection method satisfies the
following:
\begin{equation}\label{E1}\relax
\|(A-\theta_i I)u_{R_i}\|=\min\{\|(A-\theta_i I)u\| : u \in {\cal
V}~\mbox{and}~\|u\| = 1\},
\end{equation}
%The potential remedy for mis-convergence problems of eigenvector 
%approximation in the Rayleigh- Ritz projection is the Refined projection.  In the refined projection, an eigenvector approximation $u_{R}$ has the least residual norm overall unit vectors in the vector space from which eigenvalue approximations sought. It thus satisfies the following:
%\begin{equation}\label{E1}\relax
%\|(A-\theta_i I)u_{R_i}\|=\min\{\|(A-\theta_i I)u\| : u \in {\cal
%V}~\mbox{and}~\|u\| = 1\},
%\end{equation}
where $\theta_i$ is an eigenvalue approximation that retained from the Rayleigh-Ritz projection. Thus, an eigenvector approximation $u_{R_i }$ has the least residual norm overall unit vectors in the vector space from which eigenvalue approximations sought. Hence, the 
refined projection method computes an eigenvector
approximation $u_{R_i}$ by solving a singular value problem for
$(A-\theta_i I)V$. It is a general belief that $\|(A-\theta_i I)u_{R_i}\|
\ll \|(A-\theta_i I)Vy_i\|.$ In this paper, we estimate  
$\|(A-\theta_i I)u_{R_i}\|/\|(A-\theta_i I)Vy_i\|$ by using a residual vector in the LLS method.

%Like the refined projection method, LLS method also At first, by using a solution $z_i$ of the following least squares problem: %correction done by reverso up to this.
%\begin{equation}\label{E2}\relax
%\|(A-\theta_i I)Vm_i\|=\min\{\|(A-\theta_i I)Vy_i+(A-\theta_i
%I)Vz\| : z \perp y_i\},
%\end{equation}
%it improves an  eigenvector approximation $Vy_i$ in the Rayleigh-Ritz projection to $Vm_i.$ It has been proved for the Arnoldi method that utilizing $Vm_i/\|Vm_i\|$ as an eigenvector approximation is more efficient than the refined projection \cite{Someshwaragift2}. 
The LLS method preserves an eigenvalue approximation from the Rayleigh-Ritz projection. Then, it solves  the following least squares problem to find a vector $z_i$:
\begin{equation}\label{E2}\relax
\|(A-\theta_i I)Vm_i\|=\min\{\|(A-\theta_i I)Vy_i+(A-\theta_i
I)Vz\| : z \perp y_i\},
\end{equation}
Further the LLS method uses the line search technique and updates an eigenvector approximation $Vy_i$ in Rayleigh-Ritz projection to $Vs_i/\|Vs_i\|.$ Therefore, an eigenvector approximation $Vs_i/\|Vs_i\|$ in the LLS method has the following optimal property:
\begin{equation}\label{E3}\relax
\frac{\|(A-\theta_i I)Vs_i\|^2}{\|s_i\|^2}~\mbox{
 is minimum over span}\{y_i,~(I-y_iy_i^\ast )z_i\}.
\end{equation}
%where $s_i = y_i+\tau_i
%(I-y_iy_i^\ast )z_i$ and $z_i$ is a solution vector of the least squares problem (\ref{E2}).

Note that eigenvector approximations in the  LLS method are explicitly related to those in the Rayleigh-Ritz projection, unlike eigenvector approximations in the refined projection method. Further,  The LLS method avoids the mis-convergence problem of eigenvector approximations in the Rayleigh-Ritz projection. The LLS method proved its better efficiency in the Jacobi-Davidson method compared to the Jacobi-Davidson method that inherently uses refined projection \cite{Mallannagift}.
%This improved method of eigenvector extraction proved more efficient than the refined projection for the Jacobi-Davidson method \cite{Mallannagift}. Further, it has been observed that like refined projection method the LLS method also
%avoids the mis-convergence problem of eigenvector approximations in the
%Rayleigh-Ritz projection. Note that, LLS method have an advantage that an eigenvector approximation in it is 
%%Even more, the eigenvector approximations in LLS method are 
%explicitly related to an  approximate
%eigenvector in the Rayleigh-Ritz projection, unlike  eigenvector approximation in the refined projection method.
The following section will first establish a few relations between residual norms in the refined projection and the LLS method. 
%Those relations will be
%useful to relate the residual norms in the 
%Rayleigh-Ritz and refined projection methods. Next, it derives bounds for the
%ratio of residual norm in Rayleigh-Ritz projection to the residual norm in the refined projection method, which is a concern of this paper.

In what follows, the Ritz value fixed  as $\theta$ and its corresponding
eigenvector approximations as  $Vy, V u_R, \frac{Vm}{\|Vm\|}$ and
$\frac{Vs}{\|Vs\|}$ in the Rayleigh-Ritz, refined and the LLS methods respectively. Further, a subscript notation $i$ in this section will be ignored.

\section{Comparison of residual norms}
The following theorem derives a relation between residual norms in the Rayleigh-Ritz and the least squares part of the LLS methods via using a
matrix $(A-\theta I)V.$
 %Include theorem
\begin{thm}\label{thm1}\relax
Let the minimization problem (\ref{E2}) have a non-zero solution
vector $(I-yy^\ast)z.$ Then the following is true:
\begin{equation}\label{E4}\relax
\|(A-\theta I)Vm\|^2 = \|(A-\theta I)Vy\|^2-\|(A-\theta
I)V(I-yy^\ast)z\|^2.
\end{equation}
\end{thm}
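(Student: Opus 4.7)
The plan is to recognize (\ref{E4}) as a Pythagorean identity for the least squares residual of (\ref{E2}), applied to the operator $(A-\theta I)V$.

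First, I would rewrite $(A-\theta I)Vm$ in terms of $y$ and the optimal increment. Since the constraint $z \perp y$ is equivalent to $z = (I-yy^\ast)z$, the stated minimizer gives $Vm = Vy + V(I-yy^\ast)z$, hence $(A-\theta I)Vm = (A-\theta I)Vy + (A-\theta I)V(I-yy^\ast)z$. Next, I would invoke the first-order optimality condition for (\ref{E2}): because the objective is quadratic and is minimized over the linear subspace given by the range of $I-yy^\ast$, the residual at the optimum must be orthogonal to every feasible search direction in the image space. Specializing the feasible direction to $(I-yy^\ast)z$ itself yields $\langle (A-\theta I)Vm,\, (A-\theta I)V(I-yy^\ast)z\rangle = 0$.

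Finally, I would rewrite the decomposition from the first step as $(A-\theta I)Vy = (A-\theta I)Vm - (A-\theta I)V(I-yy^\ast)z$, take squared norms, and use the orthogonality to eliminate the cross term. This produces $\|(A-\theta I)Vy\|^2 = \|(A-\theta I)Vm\|^2 + \|(A-\theta I)V(I-yy^\ast)z\|^2$, which rearranges to (\ref{E4}). The entire derivation is a two-line Pythagorean argument, so there is no serious obstacle. The only conceptual point worth verifying carefully is that the feasible set in (\ref{E2}) is the genuine linear subspace $\{w : w \perp y\}$ rather than some affine or constrained set; this is what legitimizes the orthogonality conclusion over the whole search direction. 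The non-zero hypothesis on $(I-yy^\ast)z$ is not strictly required for the identity itself, which holds trivially when the minimizer is zero (then $Vm = Vy$); its role is only to make the asserted strict reduction of residual norm nontrivial.
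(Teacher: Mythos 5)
Your proposal is correct and is essentially the paper's own argument in a slightly reorganized form: the paper derives the normal equations (\ref{E5}) and takes an inner product with $z$ to get $\langle (A-\theta I)V(I-yy^\ast)z,\,(A-\theta I)Vy\rangle = -\|(A-\theta I)V(I-yy^\ast)z\|^2$, which is exactly your orthogonality condition $\langle (A-\theta I)Vm,\,(A-\theta I)V(I-yy^\ast)z\rangle = 0$ written out, followed by the same expansion of the squared norm. Your remarks about the feasible set being the linear subspace $\{w : w\perp y\}$ and the non-zero hypothesis being inessential to the identity are both accurate.
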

\begin{proof}
From the equation~(\ref{E2}) note that a vector $(I-yy^\ast )z$ 
minimizes the least squares functional $\|(A-\theta I)Vy+(A-\theta
I)V(I-yy^\ast)z\|^2.$ Thus, it is a solution of the following normal
equations:
\begin{eqnarray}\label{E5}\relax
\scalebox{0.92}{$(I-yy^\ast )V^\ast (A-\theta I)^\ast (A-\theta I)V(I-yy^\ast )z = -(I-yy^\ast )V^\ast (A-\theta I)^\ast  (A-\theta I)Vy.$}
\end{eqnarray}
Taking an inner product with  $z$ on both sides of the equation (\ref{E5}) gives
\begin{equation}\label{E6}\relax
\langle(A-\theta I)V(I-yy^\ast)z, (A-\theta I)Vy\rangle  =
-\|(A-\theta I)V(I-yy^\ast )z\|^2.
\end{equation}
From the equation (\ref{E2}) note that $Vm =
Vy+V(I-yy^\ast)z.$ Thus,
\begin{multline*}
\|(A-\theta I)Vm\|^2=\|(A-\theta I)Vy\|^2+\|(A-\theta
I)V(I-yy^\ast)z\|^2\\+ 2~ Real \langle(A-\theta I)V(I-yy^\ast)z, (A-\theta I)Vy\rangle .
\end{multline*}
Therefore, by using the equation (\ref{E6}), the above equation proves the equation~(\ref{E4}).
\end{proof}

The Theorem-\ref{thm1} shows that the least squares approach in the LLS method reduces residual norm to a better extent than the Rayleigh-Ritz method, provided $\|(A-\theta I)V(I-yy^\ast)z\|^2$ is large. 
Next, the following lemma \cite[Lemma-3]{Mallannagift} will be helpful in the Theorem-\ref{thm1bnew} to see that the line search technique of the LLS method will bring a further reduction in the residual norms.
\begin{lem}\label{thm1b}\relax
Let $u$ be a vector of unit norm and  $\alpha $ be the Rayleigh
quotient of $u$ with respect to a Hermitian matrix $B$. Let $s := u+
\tau (I-uu^\ast )t$, where $\tau\neq 0$ is chosen so that the
Rayleigh quotient $\rho(s)$  of $s$ is minimum over ${\rm span}
\{u,~(I-uu^\ast ) t\}.$ Write $J_{u,s} := (I-uu^\ast ) (B-\rho(s)I)
(I-uu^\ast ).$ Then the following relations hold:
\begin{equation}\label{equn16r}\relax
\tau = -\frac{\langle (B-\alpha I)u,(I-uu^\ast )t \rangle}{\langle
J_{u,s}(I-uu^\ast )t,(I-uu^\ast )t \rangle},
\end{equation}
\begin{equation}\label{equn17r}\relax
\rho(s) = \alpha - \frac{|\langle (B-\alpha I)u,(I-uu^\ast )t
\rangle |^2 }{\langle J_{u,s}(I-uu^\ast )t,(I-uu^\ast )t \rangle},
\end{equation}
\begin{equation}\label{equn18r}\relax
\langle J_{u,s}(u-s),u-s \rangle = \rho(u) - \rho(s),
\end{equation}
\begin{equation}\label{equn19r}\relax
(B-\rho(s) I)s = (B-\alpha I)u+J_{u,s}(s-u).
\end{equation}
\end{lem}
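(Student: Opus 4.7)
The plan is to exploit the fact that $s$ solves a two-dimensional Rayleigh--Ritz problem on $\mathrm{span}\{u,\,w\}$, where $w := (I-uu^\ast)t$. By construction $u^\ast w = 0$ and $s = u+\tau w$, and since $\rho$ is minimized over the subspace $\mathrm{span}\{u,w\}$, the minimizer $s$ satisfies a Galerkin orthogonality condition: the residual $(B-\rho(s)I)s$ is orthogonal to the whole subspace, i.e.\ both $u^\ast (B-\rho(s)I)s = 0$ and $w^\ast (B-\rho(s)I)s = 0$. Every assertion of the lemma will be extracted from these two scalar equations.

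First, I would derive (\ref{equn16r}) and (\ref{equn17r}) from the two Galerkin conditions. Substituting $s = u+\tau w$ and using $u^\ast w = 0$, the equation $w^\ast(B-\rho(s)I)s = 0$ becomes $w^\ast Bu + \tau\bigl(w^\ast Bw - \rho(s)\|w\|^2\bigr) = 0$. Since $w^\ast Bu = \langle (B-\alpha I)u,w\rangle$ (as $w\perp u$) and $w^\ast(B-\rho(s)I)w = \langle J_{u,s}w,w\rangle$ (because $(I-uu^\ast)w = w$), this rearranges directly to (\ref{equn16r}). The second Galerkin condition $u^\ast(B-\rho(s)I)s = 0$ gives
\begin{equation*}
\alpha - \rho(s) + \tau\, u^\ast B w \;=\; 0, \qquad (\star)
\end{equation*}
and $u^\ast B w = \overline{\langle (B-\alpha I)u,w\rangle}$ by Hermitian symmetry of $B$. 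Plugging $\tau$ from (\ref{equn16r}) into $(\star)$ produces the absolute value squared and yields (\ref{equn17r}).

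For (\ref{equn18r}), the plan is to use $u-s = -\tau w$ together with $(I-uu^\ast)w = w$ to collapse
\begin{equation*}
\langle J_{u,s}(u-s),\, u-s \rangle \;=\; |\tau|^2\,\langle J_{u,s}w,w \rangle.
\end{equation*}
Then (\ref{equn16r}) gives $|\tau|^2 \langle J_{u,s}w,w\rangle = |\langle (B-\alpha I)u,w\rangle|^2/\langle J_{u,s}w,w\rangle$, and (\ref{equn17r}) identifies this ratio with $\alpha - \rho(s) = \rho(u)-\rho(s)$. For (\ref{equn19r}), I would just expand both sides: the left side is $(B-\alpha I)u + (\alpha-\rho(s))u + \tau(B-\rho(s)I)w$, while the right side is $(B-\alpha I)u + \tau(B-\rho(s)I)w - \tau\, u\bigl(u^\ast(B-\rho(s)I)w\bigr)$. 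Their difference collapses, using $u^\ast w = 0$, to $\bigl(\alpha - \rho(s) + \tau\, u^\ast Bw\bigr)u$, which vanishes by $(\star)$.

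The main obstacle is bookkeeping around complex conjugation: $\tau$ need not be real, so the two Galerkin equations contribute $w^\ast Bu$ and $u^\ast Bw$ respectively, and combining them correctly is what produces the $|\cdot|^2$ in (\ref{equn17r}) rather than a bare square. Once the two Galerkin orthogonality conditions are written out and interpreted in terms of $\langle (B-\alpha I)u,w\rangle$ and $\langle J_{u,s}w,w\rangle$, everything else is algebraic rearrangement.
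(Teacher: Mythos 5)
The paper does not prove this lemma at all: it is imported verbatim as Lemma~3 of \cite{Mallannagift} (listed as ``under communication''), so there is no in-paper argument to compare against. Your proof is correct and self-contained. Writing $w=(I-uu^\ast)t$, all four identities do follow from the two Galerkin conditions $u^\ast(B-\rho(s)I)s=0$ and $w^\ast(B-\rho(s)I)s=0$ exactly as you organize them, and your bookkeeping with conjugation is right: $\tau$ is in general complex, $\langle J_{u,s}w,w\rangle=w^\ast(B-\rho(s)I)w$ is real because $B$ is Hermitian and $\rho(s)$ is real, and pairing $w^\ast Bu$ from the $w$-condition with $u^\ast Bw=\overline{w^\ast Bu}$ from the $u$-condition is what produces the modulus squared in (\ref{equn17r}) and hence the real quantity $\rho(u)-\rho(s)$ in (\ref{equn18r}). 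The one step you should justify rather than assert is the Galerkin orthogonality itself: either invoke the standard fact that the minimizer of the Rayleigh quotient over a subspace is a Ritz vector, or, more economically, note that stationarity of $\rho(u+\tau w)$ in $\bar\tau$ gives $w^\ast(B-\rho(s)I)s=0$ directly, and the $u$-condition then comes for free from $s^\ast(B-\rho(s)I)s=0$ together with $s=u+\tau w$ and the $w$-condition; this also covers the reading of the hypothesis in which only $\tau$ is varied.
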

%Now, the following theorem applies the Lemma-\ref{thm1b} to the matrix $B:= V^*(A-\theta I)^*(A-\theta I)V,$ and the vectors $u:=y,$~$t:=z.$ It also derives a relation between the residual norms in the Rayleigh-Ritz projection and the LLS method.
\begin{thm}\label{thm1bnew}\relax
Let $Vy$ be the Ritz vector corresponding to a Ritz value $\theta$
 and the vector $V(I-yy^*)z$ be a
solution of the minimization problem (\ref{E2}). Let $\tau$ be a
scalar such that $s := y+ \tau (I-yy^\ast )z$ minimizes
$\displaystyle{\frac{\|(A-\theta I)Vs\|^2}{\|s\|^2}}$
over ${\rm span}\{y,\: (I-yy^\ast )z\}.$ Write
\begin{equation}\label{eq1}\relax
J_{y,s} := (I-yy^\ast )\Big(V^\ast (A-\theta I)^\ast (A-\theta I)V-\frac{\|(A-\theta I)Vs\|^2}{\| s \|^2} I \Big) (I-yy^\ast ).
\end{equation}
Then the following relations hold:
\begin{eqnarray}\label{E7}\relax
\tau = -\frac{\big\langle \big (V^\ast (A-\theta I)^\ast (A-\theta
I)V-\|A-\theta I)Vy\|^2 I \big )y,(I-yy^\ast )z
\big\rangle}{\big\langle J_{y,s}(I-yy^\ast )z,(I-yy^\ast )z
\big\rangle},
\end{eqnarray}
\begin{equation}\label{E8}\relax
\frac{\|(A-\theta I)Vs\|^2}{\| s \|^2} =\|(A-\theta I)Vy\|^2 - \tau
\|(A-\theta I)V(I-yy^\ast )z \|^2.
\end{equation}
\end{thm}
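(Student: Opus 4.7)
The plan is to reduce this theorem to a direct application of Lemma~\ref{thm1b} by recognizing that the ratio $\|(A-\theta I)Vs\|^2/\|s\|^2$ is exactly a Rayleigh quotient for the Hermitian matrix $B := V^{\ast}(A-\theta I)^{\ast}(A-\theta I)V$. Concretely, since $V$ has orthonormal columns, $s^{\ast}V^{\ast}(A-\theta I)^{\ast}(A-\theta I)Vs / s^{\ast}s = \|(A-\theta I)Vs\|^{2}/\|s\|^{2}$, and similarly $\alpha := y^{\ast}By = \|(A-\theta I)Vy\|^{2}$ because $\|y\| = \|Vy\| = 1$. Thus minimizing the ratio in (\ref{E3}) over $\mathrm{span}\{y,(I-yy^{\ast})z\}$ is identical to minimizing the Rayleigh quotient $\rho(s)$ of $B$ over that same span, which is the setting of Lemma~\ref{thm1b}.

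With the identifications $u \leftarrow y$, $t \leftarrow z$ and $B \leftarrow V^{\ast}(A-\theta I)^{\ast}(A-\theta I)V$, the matrix $J_{u,s}$ defined in Lemma~\ref{thm1b} becomes exactly the $J_{y,s}$ of equation~(\ref{eq1}). Formula~(\ref{equn16r}) then reads
\[
\tau \;=\; -\frac{\langle(B-\alpha I)y,(I-yy^{\ast})z\rangle}{\langle J_{y,s}(I-yy^{\ast})z,(I-yy^{\ast})z\rangle},
\]
and since $\alpha = \|(A-\theta I)Vy\|^{2}$ this is precisely~(\ref{E7}). So the first assertion is immediate.

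For~(\ref{E8}), the work is to re-express the numerator of the second fraction in (\ref{equn17r}) using what Theorem~\ref{thm1} already gives us. Set $w := (I-yy^{\ast})z$. Because $(I-yy^{\ast})y=0$, the $\alpha I$ term in $\langle(B-\alpha I)y,w\rangle$ drops out, leaving $\langle By,w\rangle = w^{\ast}By$. Taking the conjugate of equation~(\ref{E6}) (which rewrites to $y^{\ast}Bw = -\|(A-\theta I)Vw\|^{2}$, a real number), I obtain
\[
\langle (B-\alpha I)y,\,(I-yy^{\ast})z\rangle \;=\; -\,\|(A-\theta I)V(I-yy^{\ast})z\|^{2}.
\]
Substituting this identity into~(\ref{equn16r}) also simplifies the formula for $\tau$ to $\tau = \|(A-\theta I)V(I-yy^{\ast})z\|^{2}/\langle J_{y,s}w,w\rangle$; substituting it into~(\ref{equn17r}) then gives
\[
\rho(s) \;=\; \|(A-\theta I)Vy\|^{2} \;-\; \frac{\|(A-\theta I)V(I-yy^{\ast})z\|^{4}}{\langle J_{y,s}w,w\rangle},
\]
and recognizing the second term as $\tau\,\|(A-\theta I)V(I-yy^{\ast})z\|^{2}$ completes~(\ref{E8}).

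The only delicate point is the bookkeeping between $\langle By,w\rangle$ and $\langle y,Bw\rangle$ when invoking Theorem~\ref{thm1}: one must use Hermiticity of $B$ and the fact that the quantity on the right-hand side of~(\ref{E6}) is real so that no stray complex conjugate appears. Everything else is just a mechanical specialization of Lemma~\ref{thm1b} to the chosen $B$, $u$, $t$.
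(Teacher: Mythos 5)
Your proposal is correct and follows essentially the same route as the paper: specialize Lemma~\ref{thm1b} with $B:=V^{\ast}(A-\theta I)^{\ast}(A-\theta I)V$, $u:=y$, $t:=z$ to get (\ref{E7}) directly from (\ref{equn16r}), then combine (\ref{equn17r}) with the normal-equation identity (\ref{E6}) to rewrite the numerator as $\|(A-\theta I)V(I-yy^{\ast})z\|^{4}$ and identify the quotient as $\tau\|(A-\theta I)V(I-yy^{\ast})z\|^{2}$. The only cosmetic difference is that the paper also transcribes (\ref{equn18r}) and (\ref{equn19r}) (for later use), which you correctly recognize as unnecessary for proving (\ref{E8}) itself.
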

 \begin{proof}
Conveying the equation~(\ref{equn16r}) in the Lemma-\ref{thm1b} for the matrix $B:= V^*(A-\theta I)^*(A-\theta I)V$ and the vectors $u:=y,~t:=z$ gives the relation in the equation (\ref{E7}). Observe that in the Lemma-\ref{thm1b}, $\alpha=\|(A-\theta I)Vy\|^2$  for $B:= V^*(A-\theta I)^*(A-\theta I)V$ and $u:=y.~$  

Next, to prove the equation (\ref{E8}), carry the equations~\ref{equn17r}, \ref{equn18r} and \ref{equn19r} in the Lemma-1 for the matrix $B:= V^*(A-\theta I)^*(A-\theta I)V$ and the vectors $u:=y,~t:=z.$  This gives the following equations respectively:
\begin{eqnarray}\label{E9}\relax
 \frac{\|(A-\theta I)Vs\|^2}{\| s \|^2} & = & \|(A-\theta I)Vy\|^2  \nonumber \\
 & - &\frac{\big|\big\langle \big (V^\ast (A-\theta I)^\ast (A-\theta I)V-\|A-\theta I)Vy\|^2 I \big )y,(I-yy^\ast )z \big\rangle \big|^2 }{\big\langle J_{y,s}(I-yy^\ast )z,(I-yy^\ast )z \big\rangle},
\end{eqnarray}
\begin{eqnarray}\label{E10}\relax
\nonumber \langle J_{y,s}(y-s),y-s \rangle & = &\tau^2 \Big(\|(A-\theta I)V(I-yy^\ast )z\|^2 -\frac{\|(A-\theta I)Vs\|^2}{\| s \|^2}\cdot\|(I-yy^\ast )z\|^2\Big) \\
& = & \|(A-\theta I)Vy\|^2-\frac{\|(A-\theta I)Vs\|^2}{\| s \|^2},
\end{eqnarray}
\begin{multline}\label{E11}\relax
 \Big(V^*(A-\theta I)^*(A-\theta I)V- \frac{\|(A-\theta I)Vs\|^2}{\| s \|^2} I\Big) s  \\
= \big(V^*(A-\theta I)^*(A-\theta I)V-\|(A-\theta I)Vy\|^2 I\big)
y+J_{y,s}(s-y).
\end{multline}

Since $y\perp (I-yy^\ast )z,$ by using the equation~(\ref{E6}), the equations (\ref{E7}) and (\ref{E9}) gives the following relations:
\begin{equation}\label{E12}\relax
\frac{\tau}{\|(A-\theta I)V(I-yy^\ast )z\|^2}= \frac{1}{\langle
J_{y,s}(I-yy^\ast )z,(I-yy^\ast )z \rangle},
\end{equation}
\begin{equation*}\label{E13}\relax
\frac{\|(A-\theta I)Vs\|^2}{\| s \|^2} =\|(A-\theta I)Vy\|^2 -
\frac{\|(A-\theta I)V(I-yy^\ast )z \|^4 }{\langle J_{y,s}(I-yy^\ast
)z,(I-yy^\ast )z \rangle}.
\end{equation*}
Now, the equation (\ref{E8}) follows from the last two equations.
\end{proof}

The equation (\ref{E8}) gives a relation between residual norms in the Rayleigh-Ritz projection and LLS methods. The following theorem derives a few more  relations by utilizing the equation (\ref{E8}) .% between them which will be helpful later.
%residual norms in the Rayleigh-Ritz and the LLS methods.

\begin{thm}\label{Vs1}\relax
Let the vector $V(I-yy^*)z$ be a
solution of the minimization problem (\ref{E2}). Let $\tau$ be a
scalar such that $s := y+ \tau (I-yy^\ast )z$ minimizes
$\displaystyle{\frac{\|(A-\theta I)Vs\|^2}{\|s\|^2}}$ over ${\rm span}\{y,\: (I-yy^\ast )z\}.$ Then, the following equations hold true:
\begin{equation}\label{E14}\relax
\frac{\|(A-\theta I)Vs\|^2}{\|s\|^2}=\Big(\frac{\tau-1}{\tau}\Big)\frac{\|(A-\theta I)V(I-yy^*)z)\|^2}{\|(I-yy^*)z\|^2},
\end{equation}
and
\begin{equation}\label{E15}\relax 
(\tau-1)\Big(\|(A-\theta I)Vy\|^2-\frac{\|(A-\theta I)Vs\|^2}{\|s\|^2} \Big)= \|(A-\theta I)Vs\|^2\Big(1- \frac{1}{\|s\|^2} \Big).    
\end{equation}
\end{thm}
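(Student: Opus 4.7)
The plan is to prove both equations by leveraging the machinery already set up in Theorems~\ref{thm1} and~\ref{thm1bnew}, with only elementary bookkeeping on top. I will use the shorthand $r_y^2 := \|(A-\theta I)Vy\|^2$, $r_z^2 := \|(A-\theta I)V(I-yy^*)z\|^2$ and $R^2 := \|(A-\theta I)Vs\|^2/\|s\|^2$ while drafting, but would expand these in the final write-up. Since $y\perp (I-yy^*)z$ and $\|y\|=1$, the identity $\|s\|^2 = 1+\tau^2\|(I-yy^*)z\|^2$ is at hand and will be used repeatedly.

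For equation~(\ref{E14}), the starting point is equation~(\ref{E12}) in the proof of Theorem~\ref{thm1bnew}, which can be rewritten as $\tau = r_z^2/\langle J_{y,s}(I-yy^*)z,(I-yy^*)z\rangle$. I would then evaluate the denominator directly from the definition~(\ref{eq1}) of $J_{y,s}$: using the idempotence of $I-yy^*$ and the fact that $(I-yy^*)z$ is already in the range of $I-yy^*$, one obtains
\begin{equation*}
\langle J_{y,s}(I-yy^*)z,(I-yy^*)z\rangle = r_z^2 - R^2\,\|(I-yy^*)z\|^2.
\end{equation*}
Substituting this back into the expression for $\tau$ and clearing denominators yields $(\tau-1)r_z^2 = \tau R^2\,\|(I-yy^*)z\|^2$, which is precisely~(\ref{E14}) after dividing by $\tau\|(I-yy^*)z\|^2$. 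This step assumes $\tau\neq 0$, which is guaranteed because the minimization problem (\ref{E2}) has a nonzero solution.

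For equation~(\ref{E15}), I would rewrite its right-hand side using $\|s\|^2 = 1+\tau^2\|(I-yy^*)z\|^2$, so that
\begin{equation*}
\|(A-\theta I)Vs\|^2\Big(1-\frac{1}{\|s\|^2}\Big) = R^2\cdot \tau^2\,\|(I-yy^*)z\|^2.
\end{equation*}
Plugging in~(\ref{E14}) on the right collapses this to $\tau(\tau-1)r_z^2$. For the left-hand side, equation~(\ref{E8}) of Theorem~\ref{thm1bnew} gives $r_y^2 - R^2 = \tau r_z^2$, so $(\tau-1)(r_y^2-R^2) = \tau(\tau-1)r_z^2$ as well, matching the right-hand side.

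I do not anticipate any genuine obstacle: both identities reduce to algebraic rearrangements once the expression for $\tau$ from~(\ref{E12}) and the residual-decrement formula~(\ref{E8}) are in hand. The most delicate point is the evaluation of $\langle J_{y,s}(I-yy^*)z,(I-yy^*)z\rangle$, where one must be careful to cancel the $y y^*$ terms correctly; everything else is bookkeeping in the three quantities $r_y^2$, $r_z^2$, and $R^2$.
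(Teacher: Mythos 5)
Your proof is correct, and it reaches both identities by a genuinely different route than the paper. For (\ref{E14}), the paper expands $\|(A-\theta I)Vs\|^2$ directly in terms of $s=y+\tau(I-yy^\ast)z$ using the orthogonality relation (\ref{E6}) (this is equation (\ref{E16}) there), writes $\|(A-\theta I)Vs\|^2=\frac{\|(A-\theta I)Vs\|^2}{\|s\|^2}\big(1+\tau^2\|(I-yy^\ast)z\|^2\big)$, and then eliminates $\|(A-\theta I)Vy\|^2$ via (\ref{E8}); you instead evaluate the quadratic form $\big\langle J_{y,s}(I-yy^\ast)z,(I-yy^\ast)z\big\rangle=\|(A-\theta I)V(I-yy^\ast)z\|^2-\frac{\|(A-\theta I)Vs\|^2}{\|s\|^2}\|(I-yy^\ast)z\|^2$ straight from the definition (\ref{eq1}) (your cancellation of the projectors is correct, since $(I-yy^\ast)$ is Hermitian idempotent and fixes $(I-yy^\ast)z$) and feed it into (\ref{E12}). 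The two computations are algebraically equivalent, but yours makes the role of $J_{y,s}$ explicit and reuses a formula already established rather than redoing the expansion. For (\ref{E15}) the difference is more substantial: the paper manufactures the auxiliary identities (\ref{E17}) and (\ref{E18}) out of (\ref{E4}), (\ref{E8}) and (\ref{E16}) --- thereby invoking Theorem~\ref{thm1} and the least-squares vector $m$ --- and subtracts them, whereas you obtain (\ref{E15}) directly from (\ref{E14}) and (\ref{E8}) together with $\|s\|^2-1=\tau^2\|(I-yy^\ast)z\|^2$; this is shorter and removes the dependence on $\|(A-\theta I)Vm\|^2$ altogether. The only point worth tightening is the justification of the divisions: $\|(I-yy^\ast)z\|^2\neq 0$ comes from the hypothesis that the solution is nonzero, and $\tau\neq 0$ is cleanest to read off from (\ref{E12}) itself (its right-hand side is a nonzero number), rather than attributing it to the nonvanishing of the least-squares solution as you do; neither issue affects the validity of the argument.
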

\begin{proof}
As the vector $(I-yy^\ast )z$ is a solution of the minimization problem~(\ref{E2}), it satisfies the equation (\ref{E6}). Now, on expanding the expression $\|(A-\theta I)Vs\|^2$ by using $s=y+\tau (I- yy^\ast )z,$ we have
\begin{equation}\label{E16}\relax
\|(A-\theta I)Vs\|^2=\|(A-\theta I)Vy\|^2+(\tau^2-2\tau)
\|(A-\theta I)V(I-yy^\ast )z \|^2.
\end{equation}
As $s=y+\tau(I-yy^\ast)z$ and $\|y\|=1,$ we have $\|s\|^2 =1+\tau^2 \|(I-yy^\ast)z\|^2.$ This implies $$\|(A-\theta I)Vs\|^2=\frac{\|(A-\theta I)Vs\|^2}{\|s\|^2}.(1+\tau^2  \| (I-yy^\ast)z\|^2).$$ Now, substitute the equations (\ref{E8}) and (\ref{E16}) in the above equation. On simplification, this gives the following relation:
$$(\tau-1)\frac{\|(A-\theta I)V(I-yy^\ast )z\|^2}{\|(I-yy^\ast )z\|^2} = \tau(\|(A-\theta I)Vy\|^2-\tau \|(A-\theta I)V(I-yy^*)z\|^2).$$
Recall from the equation (\ref{E8}) that the right-hand side of the above equation is equal to $\tau \frac{\|(A-\theta I)Vs\|^2}{\|s\|^2}.$ Thus, we have
$$ \frac{\|(A-\theta I)Vs\|^2}{\|s\|^2} =\Big(\frac{\tau-1}{\tau}\Big)\frac{\|(A-\theta I)V(I-yy^\ast )z\|^2}{\|(I-yy^\ast )z\|^2}.$$
Therefore, we proved the equation (\ref{E14}). To prove the equation (\ref{E15}), observe the following from the equations (\ref{E4}), (\ref{E8}) and (\ref{E16}): 
\begin{equation}\label{E17}\relax
(\tau-1) \|(A-\theta
I)Vy\|^2=\tau  \|(A-\theta I)Vm\|^2-\frac{\|(A-\theta I)Vs\|^2}{\|Vs\|^2},
\end{equation}
and
\begin{equation}\label{E18}\relax
(\tau-1) \frac{\|(A-\theta
I)Vs\|^2}{\|Vs\|^2}=\tau  \|(A-\theta I)Vm\|^2-\|(A-\theta I)Vs\|^2.
\end{equation}
Now, subtracting one of the above equation from the other gives the equation (\ref{E15}).
\end{proof}

In the Theorems-\ref{thm1bnew} and \ref{Vs1}, we have seen that the relations between norms of residuals in the LLS method involve the scalar $\tau.$ The following theorem gives a lower bound for the scalar $\tau$.% in the LLS method. 
\begin{thm}\label{thm3}\relax
Let $\tau$ be a scalar the same as that in the Theorem-\ref{thm1bnew}. Then $1 \leq \tau.$
\end{thm}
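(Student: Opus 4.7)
The plan is to combine two constraints on $\tau$: a sign constraint coming from equation (\ref{E14}), and a minimization inequality coming from the optimality of $s$ in the line search.

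First, I would start from equation (\ref{E14}) of Theorem \ref{Vs1},
$$\frac{\|(A-\theta I)Vs\|^2}{\|s\|^2} = \frac{\tau-1}{\tau}\cdot\frac{\|(A-\theta I)V(I-yy^\ast)z\|^2}{\|(I-yy^\ast)z\|^2}.$$
The left-hand side is non-negative, being a ratio of squared norms. Under the standing hypothesis that $(I-yy^\ast)z$ is a non-zero solution to (\ref{E2}), equation (\ref{E6}) forces $\|(A-\theta I)V(I-yy^\ast)z\|^2 > 0$; otherwise the least-squares correction would produce no improvement over $y$ and the setup would be degenerate. Hence the second factor on the right is strictly positive, and we must have $\frac{\tau-1}{\tau} \geq 0$. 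This leaves only two options: $\tau \geq 1$ or $\tau < 0$.

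Second, to eliminate the possibility $\tau < 0$, I would invoke the optimality of $s$. Since $y$ belongs to $\mathrm{span}\{y,(I-yy^\ast)z\}$ and $\|y\| = 1$, the minimizer $s$ of $\|(A-\theta I)Vs\|^2/\|s\|^2$ satisfies
$$\frac{\|(A-\theta I)Vs\|^2}{\|s\|^2} \leq \|(A-\theta I)Vy\|^2.$$
Inserting this inequality into equation (\ref{E8}) from Theorem \ref{thm1bnew} yields $\tau\,\|(A-\theta I)V(I-yy^\ast)z\|^2 \geq 0$, and, because the norm is strictly positive, this forces $\tau \geq 0$. Combining with the dichotomy from the first step gives $\tau \geq 1$, as required.

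The step I anticipate as the main obstacle is justifying the strict positivity of $\|(A-\theta I)V(I-yy^\ast)z\|^2$, since without it both (\ref{E14}) and (\ref{E8}) could degenerate and allow $\tau = 0$. I would address this by appealing directly to (\ref{E6}), which equates that squared norm (up to sign) to the inner product $\langle (A-\theta I)V(I-yy^\ast)z,(A-\theta I)Vy \rangle$; vanishing of this inner product would make $(I-yy^\ast)z = 0$ an equally good least-squares solution, contradicting the hypothesis that the least-squares problem admits a non-zero solution.
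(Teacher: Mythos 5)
Your argument is correct, but it takes a genuinely different route from the paper's. The paper proves $\tau\geq 1$ in a single chain by comparing $s$ against $m$ rather than against $y$: since $\|m\|^2=1+\|(I-yy^\ast)z\|^2\geq 1$, the Rayleigh quotient of $m$ is at most $\|(A-\theta I)Vm\|^2$, and combining the optimality (\ref{E3}) of $s$ with (\ref{E8}) and the Pythagorean identity (\ref{E4}) gives $\|(A-\theta I)Vy\|^2-\tau\|(A-\theta I)V(I-yy^\ast)z\|^2\leq\|(A-\theta I)Vy\|^2-\|(A-\theta I)V(I-yy^\ast)z\|^2$, from which $\tau\geq 1$ drops out directly. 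You instead split the work into a sign dichotomy from (\ref{E14}) (which confines $\tau$ to $\tau\geq 1$ or $\tau<0$) plus the weaker comparison of $s$ against $y$, which via (\ref{E8}) only yields $\tau\geq 0$ but suffices to kill the negative branch. Both routes stand or fall on $\|(A-\theta I)V(I-yy^\ast)z\|>0$; you are more explicit about this hypothesis than the paper is, which is a point in your favor. The one soft spot is that your first step presupposes $\tau\neq 0$: the quotient $(\tau-1)/\tau$ in (\ref{E14}) is undefined there, and the paper's derivation of (\ref{E14}) silently divides by $\tau$, so the dichotomy says nothing about the case $\tau=0$ while your second step permits it. To close this, note from (\ref{E12}) (equivalently, from computing the numerator of (\ref{E7}) via (\ref{E6})) that $\tau=0$ would force $\|(A-\theta I)V(I-yy^\ast)z\|=0$, which your non-degeneracy assumption already excludes. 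That is a one-line patch, not a structural flaw.
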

\begin{proof}
%From equation~(\ref{equn3}), we have $\|(A-\theta I)Vm\|^2 \leq \|(A-\theta I)Vy\|^2.$
By noting that $\|Vm\|^2 = \|Vy\|^2+\|V(I-yy^\ast)z\|^2
=1+\|(I-yy^\ast)z\|^2 \geq 1,$ we have
\begin{equation*}\label{E19}\relax
\frac{\|(A-\theta I)Vm\|^2}{\|Vm\|^2} \leq \|(A-\theta I)Vm\|^2.
\end{equation*}
Now, from equations~(\ref{E3}) and (\ref{E8}), we have
$$\|(A-\theta I)Vy\|^2 - \tau \|(A-\theta I)V(I-yy^\ast )z \|^2=\frac{\|(A-\theta I)Vs\|^2}{\| s \|^2} \leq \frac{\|(A-\theta I)Vm\|^2}{\| m \|^2}.$$
Now $\tau \geq 1$ is followed from equation~(\ref{E4}) and the above.
\end{proof}

The previous theorem has shown that $\tau \geq 1.$ By using the equation (\ref{E15}), observe that if $\tau=1$ then either $\|s\|^2=1$ or $\|(A-\theta I)Vs\|^2=0.$ That means, when $\tau=1,$ either $\|(I-yy^\ast)z\|=0$ or $(\theta,Vs)$ is an exact eigenpair of $A.$ Hence, in what follows we assumed that $\tau \neq 1.$  

\subsection{Comparison of line search least squares with refined projection}
In the previous section, we compared the residual norms in the Rayleigh-Ritz projection and Line search Least squares(LLS) methods. In this subsection, we establish a connection between the LLS and refined projection methods. 

Recall that an  approximate  eigenvalue $\theta$ in the refined projection method is the same as that in the Rayleigh-Ritz projection, and $u_R:=Vz_R$ is an eigenvector approximation, where $z_R$ is a right singular vector corresponding to the smallest non-zero singular value $\sigma^2$ of a matrix $(A-\theta I)V.$ Hence, a vector $z_R$ satisfies the following relations:
\begin{equation}\label{ref}\relax
V^\ast (A-\theta I)^\ast (A-\theta I)Vz_R =
\sigma^2z_R~~\mbox{and}~~\|(A-\theta I)Vz_R\|^2 = \sigma^2.
\end{equation}

%Recall that in the refined projection method, eigenvalue approximation $\theta$ is the same as that in the Rayleigh-Ritz projection, but an eigenvector approximation is $u_R=Vz_R,$ where
%$z_R$ is a right singular vector corresponding to the smallest non-zero singular value $\sigma^2$ of a matrix $(A-\theta I)V.$ Thus, a refined Ritz vector, an eigenvector approximation in the refined projection method
%% corresponding to Ritz value $\theta$ 
%satisfies the relation:
%\begin{equation}\label{ref}\relax
%V^\ast (A-\theta I)^\ast (A-\theta I)Vz_R =
%\sigma^2z_R~~\mbox{and}~~\|(A-\theta I)Vz_R\|^2 = \sigma^2.
%\end{equation}

Using the normal equations (\ref{E5}) of a least
squares problem~(\ref{E2}), observe that a vector $m= y+(I-yy^\ast )z$ satisfies
the following equation:
\begin{equation}\label{s1}\relax
V^\ast (A-\theta I)^\ast (A-\theta I)V\big(y+(I-yy^\ast)z\big) =
Ky~~\mbox {where} ~~K= \|(A-\theta I)Vm\|^2.
\end{equation}
Now, take an inner product on both sides with a vector $z_R$ and use the 
equation~(\ref{ref}) to obtain the following:
\begin{equation}\label{s2}\relax
\sigma^2 z_R^\ast\big(y+(I-yy^\ast )z\big) = Kz_R^\ast y \Rightarrow
z_R^\ast (I-yy^\ast)z = \frac{K-\sigma^2}{\sigma^2}z_R^\ast y.
\end{equation}
The above equation shows that the ratio of $z_R^\ast y$ to $z_R^\ast
(I-yy^\ast)z$ is real. In fact, the ratio is positive since $K = \|(A-
\theta I)Vm\|^2 \geq \frac{\|(A-\theta I)Vm\|^2}{\|m\|^2} \geq
\sigma^2.$ The last inequality follows since the refined Ritz
vector has smallest residual norm overall unit vectors in
the vector space spanned by column vectors of $V.$ 

By using the equation (\ref{s2}) and $s=y+\tau (I-yy^\ast)z$, we have
\begin{equation}\label{sn1}\relax
z_R^\ast s = z_R^\ast \big(y+\tau (I-yy^\ast)z\big) = z_R^\ast y
\Big(1+\tau\frac{K-\sigma^2}{\sigma^2}\Big).
\end{equation}
Since $\tau >1$ and $K \geq \sigma^2,$ the ratio of $z_R^\ast s$ to $z_R^\ast y$ is positive. Now, we restate this discussion in the form of a lemma for the future use.
\begin{lem}\label{lem2}\relax
Let $s,(I-yy^\ast)z$ be the same as that in the
equation~(\ref{E3}), and $Vz_R$ be the refined Ritz vector corresponding to the Ritz value $\theta.$  Then, $\frac{z_R^\ast s}{z_R^\ast
y}$ and  $\frac{z_R^\ast (I-yy^\ast)z }{z_R^\ast y}$ are positive.
\end{lem}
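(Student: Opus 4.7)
The proof plan is to simply assemble the two inner-product identities already sitting in the discussion preceding the lemma: equation~(\ref{s2}), which evaluates $z_R^\ast(I-yy^\ast)z$ in terms of $z_R^\ast y$, and equation~(\ref{sn1}), which does the same for $z_R^\ast s$. The whole argument then reduces to checking the sign of the two scalar coefficients that appear on the right-hand sides.

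First I would re-derive (\ref{s2}) cleanly. Starting from the normal equation~(\ref{E5}) of the least-squares problem~(\ref{E2}), together with the Galerkin identity $V^\ast(A-\theta I)Vy = \theta V^\ast V y - \theta y$ used to fold the right-hand term back, gives equation~(\ref{s1}): $V^\ast(A-\theta I)^\ast(A-\theta I)V\bigl(y+(I-yy^\ast)z\bigr) = K y$ with $K = \|(A-\theta I)Vm\|^2$. Taking the Hermitian inner product with $z_R$ and invoking the eigen-relation~(\ref{ref}), the left side collapses to $\sigma^2 z_R^\ast\bigl(y+(I-yy^\ast)z\bigr)$, and rearranging yields
\[
\frac{z_R^\ast (I-yy^\ast)z}{z_R^\ast y} \;=\; \frac{K-\sigma^2}{\sigma^2}.
\]
Substituting $s = y+\tau(I-yy^\ast)z$ gives the companion identity
\[
\frac{z_R^\ast s}{z_R^\ast y} \;=\; 1+\tau\,\frac{K-\sigma^2}{\sigma^2}.
\]

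The remaining task is positivity of these two ratios. For the first, $\sigma^2$ is the minimum of $\|(A-\theta I)Vu\|^2$ over unit vectors in $\mathrm{span}(V)$, so in particular
\[
\sigma^2 \;\leq\; \frac{\|(A-\theta I)Vm\|^2}{\|m\|^2} \;\leq\; \|(A-\theta I)Vm\|^2 \;=\; K,
\]
where the second inequality uses $\|m\|^2 = 1+\|(I-yy^\ast)z\|^2 \geq 1$. Hence $K-\sigma^2 \geq 0$, so $(K-\sigma^2)/\sigma^2 \geq 0$. For the second ratio, Theorem~\ref{thm3} supplies $\tau \geq 1$, so $1+\tau(K-\sigma^2)/\sigma^2 \geq 1 > 0$.

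No step here is really delicate; the only thing worth being careful about is the strict-versus-weak positivity. The ratio $\frac{z_R^\ast(I-yy^\ast)z}{z_R^\ast y}$ equals zero precisely when $K=\sigma^2$, which is a degenerate case in which the Rayleigh--Ritz--plus--least-squares update already coincides with the refined Ritz vector up to normalization; the paper has already set aside this degeneracy in the remark following Theorem~\ref{thm3} (where $\tau=1$ is excluded). Under that standing assumption, both ratios are strictly positive, matching the statement of the lemma.
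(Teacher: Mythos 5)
Your proposal is correct and follows essentially the same route as the paper, whose ``proof'' is precisely the discussion preceding the lemma: derive $z_R^\ast(I-yy^\ast)z=\frac{K-\sigma^2}{\sigma^2}\,z_R^\ast y$ from the normal equations (\ref{E5}), (\ref{s1}) and the singular-vector relation (\ref{ref}), then $z_R^\ast s=z_R^\ast y\bigl(1+\tau\frac{K-\sigma^2}{\sigma^2}\bigr)$, and conclude positivity from $K\geq\sigma^2$ and $\tau\geq 1$. Your closing remark on strict versus weak positivity (the degenerate case $K=\sigma^2$, equivalently $(I-yy^\ast)z=0$) is in fact slightly more careful than the paper, which asserts strict positivity from the weak inequalities; the passing appeal to a ``Galerkin identity'' in deriving (\ref{s1}) is unnecessary (only the normal equations and $y\perp(I-yy^\ast)z$ are needed) but harmless.
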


The above lemma inherently assumed that $z_R^\ast y \neq 0,$ which means the Ritz and refined Ritz vectors are not orthogonal. In numerical experiments, this statement holds true, in general. In the next theorem, we will use the
above lemma to derive a lower bound for
%Note that, in the above lemma, we inherently assumed that $z_R^\ast
%y \neq 0.$ This means, Ritz vector and refined Ritz vectors must not
%to be orthogonal to each other. It generally happens in many
%numerical experiments. However, theoretically, it is possible to
%show such cases should occur, specifically when solving for
%non-symmetric eigenvalue problems. In the next theorem, we are using
%above lemma to derive a lower bound for
$\frac{K-\sigma^2}{\sigma^2}.$

\begin{thm}\label{thms2}\relax
Let $\tau$ and a vector $(I-yy^\ast )z$ be the same as that in the Theorem-\ref{thm1bnew}. Assume that $\|(A-\theta I)Vz_R\|^2 = \sigma^2,$ $K= \|(A-\theta I)Vm\|^2,$ where $Vz_R$ is a refined Ritz
vector corresponding to the Ritz value $\theta,$  and  $m$ is a
solution vector of a least squares problem~(\ref{E2}). Then
\begin{equation}\label{s3}\relax
\frac{K-\sigma^2}{\sigma^2}  > \tau \|(I-y y^\ast)z\|^2.
\end{equation}
\end{thm}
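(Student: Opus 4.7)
My plan is to apply the refined Ritz optimality to the unit vector $Vs/\|s\|$ (which lies in $\operatorname{range}(V)$) and unfold the two expressions already derived for $\|(A-\theta I)Vs\|^2/\|s\|^2$, namely~(\ref{E8}) and~(\ref{E14}), together with~(\ref{E4}), into a lower bound on $K-\sigma^2$. Concretely, the defining property of $Vz_R$---that $\sigma^2=\|(A-\theta I)Vz_R\|^2$ is the minimum of $\|(A-\theta I)u\|^2$ over unit $u\in\operatorname{range}(V)$---applied to $u=Vs/\|s\|$ gives the cornerstone inequality $\sigma^2\le\|(A-\theta I)Vs\|^2/\|s\|^2$.

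Feeding this cornerstone into~(\ref{E14}) yields
\[
(\tau-1)\,\|(A-\theta I)V(I-yy^\ast)z\|^2\;\ge\;\tau\sigma^2\|(I-yy^\ast)z\|^2,
\]
while feeding it into~(\ref{E8}) yields $\|(A-\theta I)Vy\|^2\ge\sigma^2+\tau\|(A-\theta I)V(I-yy^\ast)z\|^2$. Subtracting $\|(A-\theta I)V(I-yy^\ast)z\|^2$ from both sides of the latter and identifying the left-hand side as $K$ via~(\ref{E4}) gives $K\ge\sigma^2+(\tau-1)\,\|(A-\theta I)V(I-yy^\ast)z\|^2$; chaining with the bound above then produces $K-\sigma^2\ge\tau\sigma^2\|(I-yy^\ast)z\|^2$, which is~(\ref{s3}) with $\ge$ in place of $>$.

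To upgrade $\ge$ to $>$, I would argue that the cornerstone inequality is actually strict: equality there would force $Vs/\|s\|$ to be a refined Ritz vector, i.e., $s$ to be proportional to $z_R$. However, by Lemma~\ref{lem2} both $z_R^\ast y$ and $z_R^\ast(I-yy^\ast)z$ are nonzero with positive ratio, so $z_R$ has genuine components along both generators of $\mathrm{span}\{y,(I-yy^\ast)z\}$, and under the standing assumption that $\sigma^2$ is the simple smallest eigenvalue of $V^\ast(A-\theta I)^\ast(A-\theta I)V$, the $1$-dimensional line $\mathrm{span}\{z_R\}$ is not contained in this $2$-dimensional subspace, so $s\not\propto z_R$ and the cornerstone is strict. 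The main obstacle lies precisely here: the $\ge$ chain is a clean algebraic consequence of~(\ref{E4}),~(\ref{E8}),~(\ref{E14}), and refined-Ritz optimality, but rigorously justifying the strict ``$>$'' as stated in~(\ref{s3})---rather than appealing to genericity---requires excluding the degenerate coincidence $s\propto z_R$, and I suspect the author treats this as implicit in the standing non-degeneracy assumptions ($\tau\ne 1$, $(I-yy^\ast)z\ne 0$, $z_R^\ast y\ne 0$) already invoked before the statement.
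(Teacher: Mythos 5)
Your derivation of the non-strict version of (\ref{s3}) is correct and takes a genuinely different, considerably more elementary route than the paper. You use only the refined-Ritz optimality $\sigma^2\le\|(A-\theta I)Vs\|^2/\|s\|^2$ applied to the unit vector $Vs/\|s\|\in{\cal V}$, fed into the two closed forms (\ref{E8}) and (\ref{E14}) and combined with (\ref{E4}); each step checks out. The paper instead takes the inner product of the identity (\ref{E11}) with $z_R$, invokes the eigenvector relation (\ref{ref}), the sign information of Lemma~\ref{lem2}, and the identity (\ref{E15}) to arrive at $(\|s\|^2-1)/\tau<(K-\sigma^2)/\sigma^2$, which is the same bound after substituting $\|s\|^2=1+\tau^2\|(I-yy^\ast)z\|^2$. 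Your route is much shorter, bypasses Lemma~\ref{lem2} and the tacit assumption $z_R^\ast y\neq 0$ entirely, and makes transparent exactly where refined-Ritz optimality enters.

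On strictness there is a genuine gap, though it is one you share with the paper rather than introduce. Your chain gives equality precisely when $\sigma^2=\|(A-\theta I)Vs\|^2/\|s\|^2$, and your attempt to exclude this case is a non sequitur: the fact that $z_R^\ast y$ and $z_R^\ast(I-yy^\ast)z$ are both nonzero is perfectly consistent with $z_R\in\mathrm{span}\{y,(I-yy^\ast)z\}$ (a vector in that span would typically have nonzero components along both generators), so Lemma~\ref{lem2} cannot rule out $s\propto z_R$. The paper's proof has the same defect: its passage from (\ref{sn2}) multiplies the inequality $z_R^\ast s/z_R^\ast y>z_R^\ast(I-yy^\ast)z/z_R^\ast y$ by the factor $\sigma^2-\|(A-\theta I)Vs\|^2/\|s\|^2$, which is only asserted to be non-positive, so strictness survives only when that factor is strictly negative, i.e.\ under exactly the non-degeneracy condition you identified. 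In the degenerate case (for instance $k=2$, where $\mathrm{span}\{y,(I-yy^\ast)z\}$ is the whole coefficient space and $Vs/\|s\|$ is itself a refined Ritz vector) one checks directly from (\ref{E4}), (\ref{E8}) and (\ref{E14}) that $K-\sigma^2=\tau\sigma^2\|(I-yy^\ast)z\|^2$ exactly, so (\ref{s3}) with strict ``$>$'' fails. The honest fix is to state the theorem with ``$\ge$'', or to add the explicit hypothesis $\sigma^2<\|(A-\theta I)Vs\|^2/\|s\|^2$; with that caveat your proof is complete and simpler than the paper's.
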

\begin{proof}
Recall the equation (\ref{eq1}) from the previous subsection:
$$J_{y,s} := (I-yy^\ast )\Big(V^\ast (A-\theta I)^\ast
(A-\theta I)V-\frac{\|(A-\theta I)Vs\|^2}{\| s \|^2} I \Big)
(I-yy^\ast ).$$ Note that $s-y = \tau(I-yy^\ast )z.$ Then, by using the equations (\ref{E6}) and (\ref{ref}), we have
$$z_R^\ast J_{y,s}(s-y) =\tau(\sigma^2z_R^\ast (I-yy^*)z +\|(A-\theta I)V(I-yy^*)z\|^2z_R^\ast y)-\tau\frac{\|(A-\theta I)Vs\|^2}{\|s\|^2}z_R^\ast (I-yy^\ast)z.$$ By using the equations
(\ref{E4}), (\ref{s1}), and (\ref{s2}), this gives $$z_R^\ast J_{y,s}(s-y)=\tau(-\sigma^2+\|(A-\theta I)Vy\|^2)z_R^\ast y-\tau\frac{\|(A-\theta I)Vs\|^2}{\|s\|^2}z_R^\ast (I-yy^\ast)z.$$ 
Recall the following equation (\ref{E11}) from the previous subsection:
$$\Big(V^*(A-\theta I)^*(A-\theta I)V- \frac{\|(A-\theta I)Vs\|^2}{\| s \|^2} I\Big) s  \\
= \big(V^*(A-\theta I)^*(A-\theta I)V-\|(A-\theta I)Vy\|^2 I\big)
y+J_{y,s}(s-y).$$
Apply an inner product on both sides of the above equation with a vector
$z_R.$ In the resulting equation substitute $z_R^\ast J_{y,s}(s-y)$ from the previous equation in the right-hand side expression. Then use the equation ~(\ref{ref}) on the left-hand side expression of the same equation. It  gives the following:
%, and   substituting the previous equation for  by using the equation,  recalled at the beginning of the proof  gives the following:
\begin{multline}\label{s4}\relax
\Big(\sigma^2-\frac{\|(A-\theta I)Vs\|^2}{\|s\|^2}\Big) z_R^\ast s =
(1-\tau)\big(\sigma^2-\|(A-\theta I)Vy\|^2 \big)z_R^\ast
y-\tau\frac{\|(A-\theta I)Vs\|^2}{\|s\|^2}z_R^\ast (I-yy^\ast)z.
\end{multline}
Now, divide the both sides of the above equation with $z_R^\ast y$
to obtain the following:
\begin{equation}\label{sn2}\relax
\Big(\sigma^2-\frac{\|(A-\theta I)Vs\|^2}{\|s\|^2}\Big)
\frac{z_R^\ast s}{z_R^\ast y} = (1-\tau)\big(\sigma^2-\|(A-\theta
I)Vy\|^2 \big)-\tau\frac{\|(A-\theta
I)Vs\|^2}{\|s\|^2}\frac{z_R^\ast (I-yy^\ast)z}{z_R^\ast y}.
\end{equation}
Recall from ~Lemma-\ref{lem2} that $\frac{z_R^\ast s}{z_R^\ast y}$
and $\frac{z_R^\ast (I-yy^\ast)z}{z_R^\ast y}$ are positive, and from the Theorem-\ref{thm3} that $\tau \geq 1.$ By using these, the following inequality relation follows from the equations (\ref{s2}) and (\ref{sn1}).
$$\frac{z_R^\ast s}{z_R^\ast y} > \frac{z_R^\ast (I-yy^\ast)z}{z_R^\ast y}.$$
As $\sigma^2-\frac{\|(A-\theta I)Vs\|^2}{\|s\|^2}$ is non-positive, by using
the above inequation, the equation (\ref{sn2}) gives 
$$(1-\tau)\big(\sigma^2-\|(A-\theta I)Vy\|^2 \big)-\tau\frac{\|(A-\theta I)Vs\|^2}{\|s\|^2}\frac{z_R^\ast (I-yy^\ast)z}{z_R^\ast y} < \Big(\sigma^2-\frac{\|(A-\theta I)Vs\|^2}{\|s\|^2}\Big) \frac{z_R^\ast (I-yy^\ast)z}{z_R^\ast y}.$$
Now, by rearranging the terms, the above inequation can be written as follows:
$$(\tau-1)(\|(A-\theta I)Vy\|^2-\sigma^2) < \Big((\tau-1)\frac{\|(A-\theta I)Vs\|^2}{\|s\|^2}+\sigma^2\Big)\frac{z_R^\ast (I-yy^\ast)z}{z_R^\ast y}.$$
%Again by using 
As $\sigma^2 \leq \frac{\|(A-\theta I)Vs\|^2}{\|s\|^2},$ we have $\Big(\|(A-\theta I)Vy\|^2-\frac{\|(A-\theta
I)Vs\|^2}{\|s\|^2}\Big) \leq (\|(A-\theta I)Vy\|^2-\sigma^2).$ Since $\tau > 1,$ by using  these two inequalities the above equation gives the following relation:
$$(\tau-1)\Big(\|(A-\theta I)Vy\|^2-\frac{\|(A-\theta
I)Vs\|^2}{\|s\|^2}\Big) < \tau\frac{\|(A-\theta I)Vs\|^2}{\|s\|^2} .\frac{z_R^\ast (I-yy^\ast)z}{z_R^\ast y}.$$
Now, on substituting the equation~(\ref{s2}) this inequality gives 
$$\frac{(\tau-1)\Big(\|(A-\theta I)Vy\|^2-\frac{\|(A-\theta
I)Vs\|^2}{\|s\|^2}\Big)}{\tau\frac{\|(A-\theta I)Vs\|^2}{\|s\|^2}}
<  \frac{K-\sigma^2}{\sigma^2}.$$
Further, by using the equation (\ref{E15}) the left-hand side expression in the above equation becomes equal to $\frac{\|s\|^2-1}{\tau}.$ Thus, we have
$$\frac{\|s\|^2-1}{\tau} <  \frac{K-\sigma^2}{\sigma^2}.$$
As $s=y+\tau (I-yy^\ast)z$ and $\|y\|=1,$ we have $\|s\|^2=1+\tau^2\|(I-yy^\ast)z\|^2.$ Therefore, on substituting
this in the above equation, we get the required inequality as in the equation (\ref{s3}).
\end{proof}

The above theorem gives a lower bound for $\frac{K-\sigma^2}{\sigma^2}.$ In order to derive an upper bound for $\frac{K-\sigma^2}{\sigma^2},$ we define the following function of a variable $\alpha:$ 
%for $0 \leq \alpha < \tau$ 
\begin{equation}\label{s6}\relax
f(\alpha) = (\tau-1)\Big(\frac{\|(A-\theta
I)Vs\|^2}{\|s\|^2}-\|(A-\theta
I)Vy\|^2\Big)+(\tau-\alpha)\frac{\|(A-\theta I)Vs\|^2}{\|s\|^2}
\frac{K-\sigma^2}{\sigma^2}.
\end{equation}
Now, the following lemma describes the characteristics of the function $f(\alpha).$
\begin{lem}\label{VS-lem1}\relax
Let $f(\alpha)$ be a function of $\alpha$ defined as in the equation (\ref{s6}). Then the following are hold true:\\
a) $f(\alpha)$ is a monotonic decreasing function of $\alpha.$\\
b) If $f(\alpha) \leq 0$ for any  $0 \leq \alpha  < \tau$ then $\alpha$ satisfies
the inequation: $$ \tau\|(I-yy^\ast)z\|^2 < \frac{\|s\|^2-1}{(\tau-\alpha)}.$$
c) There exists a root between $0$ and $\tau$ for the equation $f(\alpha)=0.$
\end{lem}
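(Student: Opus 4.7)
\medskip
\noindent\textbf{Proof proposal.} Since $f(\alpha)$ depends on $\alpha$ only through the linear factor $(\tau-\alpha)$, the entire analysis reduces to verifying the sign of the slope and evaluating $f$ at the two endpoints $\alpha=0$ and $\alpha=\tau$. The key algebraic lever is the identity (\ref{E15}), which rewrites $(\tau-1)\big(\|(A-\theta I)Vy\|^2-\tfrac{\|(A-\theta I)Vs\|^2}{\|s\|^2}\big)$ in the form $\tfrac{\|(A-\theta I)Vs\|^2}{\|s\|^2}\,(\|s\|^2-1)$; this is what allows a clean comparison with the strict lower bound on $(K-\sigma^2)/\sigma^2$ from Theorem \ref{thms2}.

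For part (a), the coefficient of $(\tau-\alpha)$ in the definition of $f$ is $\tfrac{\|(A-\theta I)Vs\|^2}{\|s\|^2}\cdot\tfrac{K-\sigma^2}{\sigma^2}$. The first factor is a squared residual norm, and the second is positive because $K\ge \sigma^2$ (the refined Ritz vector has smallest residual norm among unit vectors in ${\cal V}$, as noted before Lemma \ref{lem2}). Hence $f$ is affine decreasing in $\alpha$.

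For part (b), I would rearrange $f(\alpha)\le 0$ into
$$(\tau-\alpha)\,\frac{\|(A-\theta I)Vs\|^2}{\|s\|^2}\,\frac{K-\sigma^2}{\sigma^2} \;\le\; (\tau-1)\Big(\|(A-\theta I)Vy\|^2 - \frac{\|(A-\theta I)Vs\|^2}{\|s\|^2}\Big),$$
apply (\ref{E15}) on the right-hand side, and cancel the common positive factor $\tfrac{\|(A-\theta I)Vs\|^2}{\|s\|^2}$, leaving $(\tau-\alpha)\tfrac{K-\sigma^2}{\sigma^2}\le \|s\|^2-1$. Chaining with $(K-\sigma^2)/\sigma^2>\tau\|(I-yy^\ast)z\|^2$ from Theorem \ref{thms2} then yields the claimed inequality $\tau\|(I-yy^\ast)z\|^2 < (\|s\|^2-1)/(\tau-\alpha)$.

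For part (c), I would evaluate $f$ at the endpoints. At $\alpha=\tau$, the second summand vanishes, and (\ref{E8}) together with $\tau\ge 1$ (Theorem \ref{thm3}) gives $\tfrac{\|(A-\theta I)Vs\|^2}{\|s\|^2}\le \|(A-\theta I)Vy\|^2$, so $f(\tau)\le 0$. At $\alpha=0$, the inequality $f(0)>0$ reduces, after the same use of (\ref{E15}) together with $\|s\|^2=1+\tau^2\|(I-yy^\ast)z\|^2$, to precisely the strict bound $(K-\sigma^2)/\sigma^2>\tau\|(I-yy^\ast)z\|^2$ of Theorem \ref{thms2}. Continuity and the intermediate value theorem then produce a root in $(0,\tau]$. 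The main obstacle is essentially bookkeeping: recognising that both endpoint evaluations, once rewritten through (\ref{E15}), reduce to the single strict inequality of Theorem \ref{thms2}; once that observation is in place, parts (a)--(c) fall out as straightforward consequences.
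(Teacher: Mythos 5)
Your proposal is correct and follows essentially the same route as the paper: the affine decreasing structure of $f$ for part (a), the rewriting of $f$ via (\ref{E15}) combined with the strict bound of Theorem \ref{thms2} for part (b), and endpoint evaluation plus the intermediate value theorem for part (c). The only cosmetic difference is at $\alpha=\tau$: the paper obtains the strict inequality $f(\tau)<0$ directly from $\|s\|^2=1+\tau^2\|(I-yy^\ast)z\|^2>1$, whereas your appeal to (\ref{E8}) only yields $f(\tau)\le 0$ and hence a root in $(0,\tau]$; since $\tau>1$ and $(I-yy^\ast)z\neq 0$ are assumed throughout, the strict version is immediate and places the root in the open interval $(0,\tau)$.
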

\begin{proof}
a) For the given function $f(\alpha),$ we have
$$f'(\alpha) =  -\frac{\|(A-\theta I)Vs\|^2}{\|s\|^2}
\Big(\frac{K-\sigma^2}{\sigma^2}\Big).$$
To see $f'(\alpha) \leq 0,~\forall \alpha,$ use the optimal property of residual norms in the refined projection method and observe $\sigma^2 \leq K.$ Therefore, $f(\alpha)$ is a monotonically decreasing function of $\alpha.$\\
b) The substitution of the equation (\ref{E15}) in the equation (\ref{s6}) leads to
\begin{equation}\label{eq-VS3}\relax
f(\alpha) = \Bigg((1-\|s\|^2)+(\tau-\alpha)\Big( \frac{K-\sigma^2}{\sigma^2}\Big)\Bigg)\frac{\|(A-\theta I)Vs\|^2}{\|s\|^2}.
\end{equation}
Thus, $f(\alpha) \leq 0$ implies
$(1-\|s\|^2)+(\tau-\alpha)(
\frac{K-\sigma^2}{\sigma^2})$ is non-positive. Therefore, by using the equation (\ref{s3}), this gives the required inequality as  $0 \leq \alpha < \tau.$\\
c) We have
$$f(0)=  \Bigg((1-\|s\|^2)+\tau\Big(\frac{K-\sigma^2}{\sigma^2}\Big)\Bigg)\frac{\|(A-\theta I)Vs\|^2}{\|s\|^2}.$$
By using $\|s\|^2=1+\tau^2\|(I-yy^\ast)z\|^2,$ the above equation can be written as
$$f(0) = \tau\Bigg(-\tau \|(I-yy^\ast)z\|^2+\Big(\frac{K-\sigma^2}{\sigma^2}\Big)\Bigg)\frac{\|(A-\theta I)Vs\|^2}{\|s\|^2}.$$
As $\tau >1$, it is an easy to see that $f(0) >0$ by using (\ref{s3}) and the above equation.\\
Similarly, consider
$$f(\tau)=(1-\|s\|^2)\frac{\|(A-\theta I)Vs\|^2}{\|s\|^2}. $$
As $\|s\|^2=1+\tau^2\|(I-yy^\ast)z\|^2 >1,$ we have $f(\tau) <0.$ Therefore, we have $f(0) >0$ and $f(\tau) <0.$ Hence, there exists a root for the equation $f(\alpha)=0$ between $0$ and $\tau.$
\end{proof}

The Lemma-\ref{VS-lem1} has shown that equation $f(\alpha)=0$ has a solution  between $0$ and $\tau.$ Let $\alpha_3$ is such a root. Then, from the equation (\ref{eq-VS3}) we have
\begin{equation}\label{eq-VS4}\relax
\frac{K-\sigma^2}{\sigma^2} =\frac{\|s\|^2-1}{\tau-\alpha_3}.
\end{equation}
Note that the above equation turns the problem of finding an upper bound for $\frac{K-\sigma^2}{\sigma^2}$ into deriving an upper bound for $\alpha_3.$   To derive an upper bound for $\alpha_3,$ which depends only on the scalar $\tau,$ we make use of the following function of $\alpha:$
%\begin{equation}\label{KS-H}\relax
%H(\alpha)=(\tau-\alpha)f(\alpha)-(\tau-\alpha)^2g(\alpha)+(\tau-\alpha)g(\alpha), 
%\end{equation}
%where 
\begin{equation}\label{eq-VSAT1}\relax
g(\alpha)= \Big(\frac{\|s\|^2-1}{\tau-\alpha}-\tau \|(I-yy^\ast)z\|^2\Big)\frac{\|(A-\theta I)Vs\|^2}{\|s\|^2}.   
\end{equation}

Note that the function $g(\alpha)$ is obtained by multiplying the difference between both sides of the inequality in the Lemma-\ref{VS-lem1}(b) with $\frac{\|(A-\theta I)Vs\|^2}{\|s\|^2}.$ In the following lemma,  we characterize the function $g(\alpha)$ defined in the above equation and will establish its relation with the function $f(\alpha).$

\begin{lem}\label{OBV}\relax
Let $g(\alpha)$ be a function defined as in the equation (\ref{eq-VSAT1}). Then, 
%a). $$g(\tau-1)=(\tau-1) \tau \|(I-yy^\ast)z\|^2 \frac{\|(A-\theta I)Vs\|^2}{\|s\|^2}.$$ 
 the functions $g(\alpha)$ and $g(\alpha)-f(\alpha)$ are monotonically increasing functions of $\alpha$ in the interval $[0,\tau).$\\
\end{lem}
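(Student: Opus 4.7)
The plan is to reduce the statement to a direct differentiation, since both $f(\alpha)$ and $g(\alpha)$ depend on $\alpha$ only through the factors $(\tau-\alpha)$ and $1/(\tau-\alpha)$. Everything else — $\|s\|^2$, the scalar $R:=\|(A-\theta I)Vs\|^2/\|s\|^2$, $\|(I-yy^\ast)z\|^2$, $\|(A-\theta I)Vy\|^2$, $K$, and $\sigma^2$ — is fixed by the setup of Theorems \ref{thm1bnew} and \ref{thms2} and does not depend on $\alpha$. So I would begin by emphasizing this, which makes every derivative below a one-line computation.

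First I would prove that $g$ is monotonically increasing on $[0,\tau)$. Differentiating the defining formula (\ref{eq-VSAT1}) term by term with respect to $\alpha$ gives
\begin{equation*}
g'(\alpha) \;=\; \frac{\|s\|^2-1}{(\tau-\alpha)^2}\,\frac{\|(A-\theta I)Vs\|^2}{\|s\|^2}.
\end{equation*}
Since $s=y+\tau(I-yy^\ast)z$ with $\|y\|=1$, we have $\|s\|^2=1+\tau^2\|(I-yy^\ast)z\|^2\ge 1$ (and, by the convention $\tau\ne 1$ with $(I-yy^\ast)z\ne 0$ adopted earlier, strictly greater than $1$). Thus $g'(\alpha)\ge 0$ throughout $[0,\tau)$, proving the first part.

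Next I would handle $g-f$. The cleanest route is to replace $f(\alpha)$ by the equivalent expression derived inside the proof of Lemma~\ref{VS-lem1}, namely equation (\ref{eq-VS3}):
\begin{equation*}
f(\alpha) \;=\; \Big((1-\|s\|^2)+(\tau-\alpha)\tfrac{K-\sigma^2}{\sigma^2}\Big)\,R.
\end{equation*}
Subtracting this from (\ref{eq-VSAT1}) and differentiating, the two constant-in-$\alpha$ contributions $(1-\|s\|^2)R$ and $-\tau\|(I-yy^\ast)z\|^2 R$ drop out, and I would arrive at
\begin{equation*}
\big(g(\alpha)-f(\alpha)\big)' \;=\; \Bigg(\frac{\|s\|^2-1}{(\tau-\alpha)^2}+\frac{K-\sigma^2}{\sigma^2}\Bigg)\,R.
\end{equation*}
The first summand is nonnegative by the previous step, and the second is nonnegative because $Vm/\|m\|$ is a unit vector in $\mathcal V$ while $Vz_R$ is the minimiser of the residual norm over all unit vectors in $\mathcal V$, so $\sigma^2\le\|(A-\theta I)Vm\|^2/\|m\|^2\le\|(A-\theta I)Vm\|^2=K$. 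This yields $(g-f)'(\alpha)\ge 0$ on $[0,\tau)$, completing the proof.

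There is no real obstacle here beyond bookkeeping; the only subtle point is recognising that one should use the compact form (\ref{eq-VS3}) of $f$, rather than the original definition (\ref{s6}), before forming $g-f$ — otherwise one carries along cancelling terms involving $\|(A-\theta I)Vy\|^2$ and $R$ that obscure the structure. The sign of $K-\sigma^2$ is the one place where a nontrivial earlier fact (the optimality of the refined Ritz vector) is invoked, and I would flag that explicitly.
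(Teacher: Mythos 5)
Your proposal is correct and follows essentially the same route as the paper: both differentiate $g$ (your factor $\|s\|^2-1$ equals the paper's $\tau^2\|(I-yy^\ast)z\|^2$), and both reduce the monotonicity of $g-f$ to $g'\ge 0$ together with $f'(\alpha)=-\frac{K-\sigma^2}{\sigma^2}\cdot\frac{\|(A-\theta I)Vs\|^2}{\|s\|^2}\le 0$, which the paper simply cites from Lemma~\ref{VS-lem1}(a) while you recompute it from (\ref{eq-VS3}). No gaps.
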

\begin{proof}
%Substituting $\|s\|^2=1+\tau^2 \|(I-yy^\ast)z\|^2$ the equation (\ref{eq-VSAT1}) proves the part-a. 
Use $\|s\|^2=1+\tau^2 \|(I-yy^\ast)z\|^2$  in the equation (\ref{eq-VSAT1})to observe that
\begin{equation}\label{OBV4}\relax
g(\alpha)= \Big(\frac{\tau}{\tau-\alpha}-1\Big)\tau \|(I-yy^\ast)z\|^2 \frac{\|(A-\theta I)Vs\|^2}{\|s\|^2}.
\end{equation}
This shows that
$$g'(\alpha)= \frac{\tau^2 \|(I-yy^\ast)z\|^2}{(\tau-\alpha)^2}\frac{\|(A-\theta I)Vs\|^2}{\|s\|^2} >0.$$
Thus, the  proof is over as $f'(\alpha) \leq 0, ~\forall \alpha$ from the Lemma-\ref{VS-lem1}(a). 
\end{proof}

In what follows, with the help of the function $g(\alpha)$ we derive an upper bound for $\alpha_3,$ a solution of the equation $f(\alpha)=0.$  
For this, the following theorem introduce $\alpha_6,$ a root of the equation $f(\alpha)-\tau \alpha  \|(I-yy^\ast)z\|^2 \frac{\|(A-\theta I)Vs\|^2}{\|s\|^2}=0$ and determine a relation between $\alpha_6$ and $\alpha_3.$ 
\begin{thm}\label{VSLNSMKV1}\relax
Let $\alpha_6 \neq \tau$ is such that $f(\alpha_6)-\tau \alpha_6\|(I-yy^\ast)z\|^2 \frac{\|(A-\theta I)Vs\|^2}{\|s\|^2}=0,$ then $2\alpha_3-\tau \leq \alpha_6 < \alpha_3$ and
\begin{equation}\label{SV14}\relax
\alpha_3=\frac{2\tau \alpha_6}{\tau+\alpha_6},
% \tau-\alpha_6=\frac{2\tau(\tau-\alpha_3)}{(2\tau-\alpha_3)},
\end{equation}
where $\alpha_3 < \tau$ is same as in the Lemma-\ref{VS-lem1}. 
\end{thm}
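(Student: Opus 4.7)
The plan is to derive the closed-form identity (\ref{SV14}) first, and then read off the two bracketing inequalities as purely algebraic consequences. The unifying observation is that the equation $f(\alpha_3)=0$ and the defining equation for $\alpha_6$ each determine the quantity $(K-\sigma^2)/\sigma^2$ in terms of $\tau$, one of $\alpha_3,\alpha_6$, and $\|(I-yy^\ast)z\|^2$; equating the two expressions yields a single relation between $\alpha_3$ and $\alpha_6$ that collapses to (\ref{SV14}).

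To carry this out, I would substitute the formula (\ref{eq-VS3}) for $f(\alpha)$ into the defining relation $f(\alpha_6)=\tau\alpha_6\|(I-yy^\ast)z\|^2\cdot\|(A-\theta I)Vs\|^2/\|s\|^2$, cancel the common factor $\|(A-\theta I)Vs\|^2/\|s\|^2$ (nonzero because we have excluded $\tau=1$ by the convention introduced after Theorem \ref{thm3}), and use the identity $\|s\|^2-1=\tau^2\|(I-yy^\ast)z\|^2$. This reduces the $\alpha_6$-equation to $(\tau-\alpha_6)\frac{K-\sigma^2}{\sigma^2}=\tau(\tau+\alpha_6)\|(I-yy^\ast)z\|^2$. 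Equation (\ref{eq-VS4}) together with the same identity gives $\frac{K-\sigma^2}{\sigma^2}=\frac{\tau^2\|(I-yy^\ast)z\|^2}{\tau-\alpha_3}$. Equating the two expressions for $(K-\sigma^2)/\sigma^2$ and cancelling $\|(I-yy^\ast)z\|^2$ produces $(\tau+\alpha_6)(\tau-\alpha_3)=\tau(\tau-\alpha_6)$; expanding and collecting the $\alpha_3$-terms yields $\alpha_3(\tau+\alpha_6)=2\tau\alpha_6$, which is exactly (\ref{SV14}).

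For the bracketing inequalities, I would invert (\ref{SV14}) to get $\alpha_6=\alpha_3\tau/(2\tau-\alpha_3)$. By Lemma \ref{VS-lem1}(c) the root $\alpha_3$ lies strictly between $0$ and $\tau$, so $2\tau-\alpha_3\in(\tau,2\tau)$, from which $0<\alpha_6<\tau$ is immediate. A direct computation then yields $\alpha_3-\alpha_6=\alpha_6(\tau-\alpha_6)/(\tau+\alpha_6)>0$, giving the strict inequality $\alpha_6<\alpha_3$, and $\alpha_6-(2\alpha_3-\tau)=(\tau-\alpha_6)^2/(\tau+\alpha_6)\geq 0$, giving $2\alpha_3-\tau\leq\alpha_6$, with equality only in the excluded case $\alpha_6=\tau$.

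The main hazard is bookkeeping in the first step: one must carefully track the common scalar factor $\|(A-\theta I)Vs\|^2/\|s\|^2$ on both sides of the $\alpha_6$-equation and verify it is nonzero before dividing, and one must substitute $\|s\|^2-1=\tau^2\|(I-yy^\ast)z\|^2$ at the right moment so that $\|(I-yy^\ast)z\|^2$ cancels cleanly when the two expressions for $(K-\sigma^2)/\sigma^2$ are equated. Once that cancellation is performed, the argument is elementary algebra, and both inequalities fall out directly from the explicit formula (\ref{SV14}).
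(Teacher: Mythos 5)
Your derivation of (\ref{SV14}) is correct and is essentially the paper's own argument: the paper substitutes (\ref{eq-VS4}) into (\ref{eq-VS3}), uses $\|s\|^2-1=\tau^2\|(I-yy^\ast)z\|^2$ to obtain the linear form (\ref{eq-OBVnew}) of $f$, imposes the defining relation for $\alpha_6$, cancels the nonzero factor $\tau\|(I-yy^\ast)z\|^2\|(A-\theta I)Vs\|^2/\|s\|^2$, and reduces to $\frac{\tau+\alpha_6}{\tau-\alpha_6}=\frac{\tau}{\tau-\alpha_3}$, which is exactly the relation you reach by equating your two expressions for $(K-\sigma^2)/\sigma^2$.

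Where you genuinely add something is the second half. The paper's proof stops at (\ref{SV14}) and never establishes the bracketing inequalities $2\alpha_3-\tau\leq\alpha_6<\alpha_3$ that appear in the statement of the theorem; your computations $\alpha_3-\alpha_6=\alpha_6(\tau-\alpha_6)/(\tau+\alpha_6)>0$ and $\alpha_6-(2\alpha_3-\tau)=(\tau-\alpha_6)^2/(\tau+\alpha_6)\geq 0$, combined with $0<\alpha_3<\tau$ from Lemma \ref{VS-lem1}(c), supply the missing argument and are correct. One small caveat: your justification for the non-vanishing of $\|(A-\theta I)Vs\|^2/\|s\|^2$ via the convention $\tau\neq 1$ is not quite what that convention gives (it excludes $\tau=1$, not the degenerate case where $(\theta,Vs)$ is an exact eigenpair); the cancellation also requires $\|(I-yy^\ast)z\|\neq 0$. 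The paper makes the same non-degeneracy assumption explicitly inside its proof (and as a hypothesis of Theorem \ref{OBV-T67}), so this is a shared standing assumption rather than a gap in your argument, but it should be stated as such rather than attributed to the $\tau\neq 1$ convention.
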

\begin{proof}
Note that substituting the equation (\ref{eq-VS4}) in the equation (\ref{eq-VS3}) gives   
\begin{equation*}
f(\alpha) = \Bigg((1-\|s\|^2)+(\tau-\alpha)\Big( \frac{\|s\|^2-1}{\tau-\alpha_3}\Big)\Bigg)\frac{\|(A-\theta I)Vs\|^2}{\|s\|^2}.
\end{equation*}
Further, using  $\|s\|^2=1+\tau^2 \|(I-yy^\ast)z\|^2$ the above equation can be simplified as the following:
\begin{equation}\label{eq-OBVnew}\relax
f(\alpha) = \Bigg(-\tau+ \frac{(\tau-\alpha)\tau}{\tau-\alpha_3}\Bigg)\tau \|(I-yy^\ast)z\|^2 .\frac{\|(A-\theta I)Vs\|^2}{\|s\|^2}.
\end{equation}
Let $\alpha_6 \neq \tau.$ Since $f(\alpha_6)-\tau \alpha_6\|(I-yy^\ast)z\|^2  \frac{\|(A-\theta I)Vs\|^2}{\|s\|^2}=0$ the equation (\ref{eq-OBVnew}) would imply
\begin{multline}
f(\alpha_6) = \Bigg(-\tau+ \frac{(\tau-\alpha_6)\tau}{\tau-\alpha_3}\Bigg)\tau \|(I-yy^\ast)z\|^2 .\frac{\|(A-\theta I)Vs\|^2}{\|s\|^2}\\=\tau \alpha_6\|(I-yy^\ast)z\|^2  \frac{\|(A-\theta I)Vs\|^2}{\|s\|^2}.~~~~~~~~~~~~~~~~~~~~~~~~~~~~~~~~~~~~~~~~~~
\end{multline}
As $\tau \|(I-yy^\ast)z\|^2 \frac{\|(A-\theta I)Vs\|^2}{\|s\|^2} \neq 0$ and $\tau \neq \alpha_6,$ on simplification, the above equation gives 
$$\frac{\tau+\alpha_6}{\tau-\alpha_6}=\frac{\tau}{\tau-\alpha_3}.$$ Using Componendo and Dividendo, the above equation proves
the equation (\ref{SV14}).  
\end{proof}

Recall that  our aim is to determine an upper bound for $\alpha_3$ in terms of $\tau.$ From the above theorem it is equivalent to identify an upper bound for $\alpha_6.$ For this,the following section introduces another scalar, called $\alpha_7$ and determine its location with respect to $\alpha_3$ and $\alpha_6$ on the real line.
\section{Sectional Formulae}
In this section, we introduce a scalar $\alpha_7 <\tau,$ a root of the equation $f(\alpha)-g(\alpha)=0.$
% and establish its relation with a scalar $\alpha_6$ in the previous section.
The following lemma determines a sufficient condition for $\alpha_6$ to divide $\alpha_7$ and $\alpha_3$ externally on the real line.
\begin{lem}\label{OBV-a7}\relax
Let $f(\alpha),g(\alpha)$ and $\alpha_6$ be the same as mentioned in the Lemma-\ref{OBV} and the Theorem-\ref{VSLNSMKV1}. Assume that $\alpha_7 <\tau$ is a root of the equation $f(\alpha)-g(\alpha)=0.$  Then $\alpha_7 \leq \alpha_3.$ Further, If $\alpha_7 \leq \tau-1$ then $\alpha_6 \leq \alpha_7.$ 
\end{lem}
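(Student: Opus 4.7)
The plan is to lean on the monotonicity already established in Lemma~\ref{VS-lem1}(a) and Lemma~\ref{OBV} together with the closed form
\[
g(\alpha) \;=\; \frac{\alpha}{\tau-\alpha}\,\beta, \qquad \beta \;:=\; \tau\,\|(I-yy^\ast)z\|^2\,\frac{\|(A-\theta I)Vs\|^2}{\|s\|^2}\,>\,0,
\]
which is the form of $g$ obtained inside the proof of Lemma~\ref{OBV}. Two features of this formula drive the whole argument: $g$ is non-negative on $[0,\tau)$, and combining $g$ with $-\alpha\beta$ over a common denominator produces the factor $\alpha-(\tau-1)$ in the numerator.

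For the first inequality $\alpha_7\le\alpha_3$, I would argue as follows. Since $\alpha_7$ is a root of $f-g$, we have $f(\alpha_7)=g(\alpha_7)\ge 0$. But $\alpha_3$ is a root of $f$, so $f(\alpha_3)=0$. Because Lemma~\ref{VS-lem1}(a) asserts that $f$ is monotonically decreasing on $[0,\tau)$, the relation $f(\alpha_7)\ge f(\alpha_3)$ forces $\alpha_7\le\alpha_3$ directly.

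For the second inequality $\alpha_6\le\alpha_7$ under the hypothesis $\alpha_7\le\tau-1$, I would introduce the auxiliary function $h(\alpha):=f(\alpha)-\alpha\beta$. By the defining equation for $\alpha_6$ in Theorem~\ref{VSLNSMKV1} we have $h(\alpha_6)=0$, and since $f'\le 0$ while $\alpha\beta$ is strictly increasing, $h$ is strictly decreasing on $[0,\tau)$. Using $f(\alpha_7)=g(\alpha_7)=\tfrac{\alpha_7\beta}{\tau-\alpha_7}$, a short algebraic step gives the key factorization
\[
h(\alpha_7) \;=\; \frac{\alpha_7\,\bigl(\alpha_7-(\tau-1)\bigr)}{\tau-\alpha_7}\,\beta .
\]
Under the hypothesis $\alpha_7\le\tau-1$ the numerator is non-positive, the denominator is positive, and $\beta>0$, so $h(\alpha_7)\le 0=h(\alpha_6)$, and strict monotonicity of $h$ forces $\alpha_6\le\alpha_7$.

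The main obstacle is not the monotonicity bookkeeping but the algebraic rearrangement that exposes the factor $\alpha_7-(\tau-1)$ in $h(\alpha_7)$; once this factorization is in hand, the hypothesis $\alpha_7\le\tau-1$ plugs in mechanically. A minor technical point is that $\alpha_7\in(0,\tau)$ (so the denominator above is strictly positive and $\alpha_7\ge 0$): this follows by noting $f(0)>0$ from Theorem~\ref{thms2}, $g(0)=0$, and $(f-g)(\alpha)\to-\infty$ as $\alpha\uparrow\tau$, so the unique root $\alpha_7$ of $f-g$ in $[0,\tau)$ must lie strictly inside the interval.
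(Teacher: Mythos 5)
Your proposal is correct and takes essentially the same route as the paper: both halves rest on the monotone decrease of $f$ (for $\alpha_7\le\alpha_3$) and of $h(\alpha):=f(\alpha)-\alpha\beta$ with $\beta:=\tau\|(I-yy^\ast)z\|^2\,\|(A-\theta I)Vs\|^2/\|s\|^2$ (for $\alpha_6\le\alpha_7$), with the hypothesis $\alpha_7\le\tau-1$ entering through the closed form $g(\alpha_7)=\frac{\alpha_7}{\tau-\alpha_7}\beta$. In fact your factorization $h(\alpha_7)=\frac{\alpha_7\bigl(\alpha_7-(\tau-1)\bigr)}{\tau-\alpha_7}\beta\le 0$ is the correct rendering of a step the paper prints with the inequality reversed (it asserts $g(\alpha_7)\ge\alpha_7\beta$, which under the decreasingness of $h$ would yield the opposite conclusion $\alpha_7\le\alpha_6$); your direction $h(\alpha_7)\le 0=h(\alpha_6)$ is the one that actually delivers $\alpha_6\le\alpha_7$, and your closing remark that $\alpha_7\in(0,\tau)$ supplies a justification the paper leaves implicit.
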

\begin{proof}
Observe from the equation (\ref{OBV4}) that $$g(\tau-1)=(\tau-1)\tau\|(I-yy^\ast)z\|^2\frac{\|(A-\theta I)Vs\|^2}{\|s\|^2}$$ and 
$$g(\alpha_7)=\Big(\frac{\alpha_7}{\tau-\alpha_7}\Big)\tau\|(I-yy^\ast)z\|^2\frac{\|(A-\theta I)Vs\|^2}{\|s\|^2}. $$
Note that $\alpha_7 \leq \tau-1.$ Since $g(\alpha)$ is monotonically increasing function, from the Lemma--\ref{OBV} and using the first equation in the above, we have 
$$g(\alpha_7) \geq \tau\alpha_7 \|(I-yy^\ast)z\|^2\frac{\|(A-\theta I)Vs\|^2}{\|s\|^2}. $$
Using $f(\alpha_7)=g(\alpha_7)$ the above equation gives the following inequality:
$$f(\alpha_7) \geq \tau\alpha_7 \|(I-yy^\ast)z\|^2\frac{\|(A-\theta I)Vs\|^2}{\|s\|^2}. $$
Now, $\alpha_6 \leq \alpha_7$  is proved by invoke from the Lemma-\ref{VS-lem1}(a) that $f(\alpha)-\tau\alpha\|(I-yy^\ast)z\|^2\frac{\|(A-\theta I)Vs\|^2}{\|s\|^2}$ is a monotonically decreasing function of $\alpha$ and  
 %  Also, note from the Lemma that 
$$f(\alpha_6)-\tau\alpha_6 \|(I-yy^\ast)z\|^2\frac{\|(A-\theta I)Vs\|^2}{\|s\|^2}=0.$$ 
%Now, we prove $\alpha_6 \leq \alpha_7$ \ni From the equation we have
Note that $\alpha_7 \leq \alpha_3$ follows as $f(\alpha_3)=0$ and $f(\alpha)$ is a monotonically decreasing function of $\alpha$ from the Lemma-\ref{VS-lem1}(a). 
\end{proof}

The Lemma-\ref{OBV-a7} says that if $\alpha_7 \leq \tau-1$ then $\alpha_6 \leq \tau-1.$ This implies $\alpha_3 \leq \frac{2\tau(\tau-1)}{2\tau-1}$ as $\alpha_3= \frac{2\tau\alpha_6}{\tau+\alpha_6}$ from the Theorem-\ref{VSLNSMKV1}. By using the Harmonic mean and Arithemetic mean inequality this gives the following lemma:
\begin{lem}\label{OBV-a3}\relax
Let $\alpha_7$ and $\alpha_3$ be scalars the same as in the Lemma-\ref{OBV-Ek1}. If $\alpha_7 \leq \tau-1$ then $\alpha_3 \leq \tau-\frac{1}{2}.$
\end{lem}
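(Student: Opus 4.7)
The plan is to simply formalize the four-line sketch that the author has already given in the paragraph immediately preceding the lemma. There is no new heavy machinery needed: the result is essentially an algebraic consequence of Lemma~\ref{OBV-a7}, Theorem~\ref{VSLNSMKV1}, and the inequality between the harmonic and arithmetic means of two positive reals.

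\textbf{Step 1 (Pin down $\alpha_6$).} First I would apply Lemma~\ref{OBV-a7}. Under the hypothesis $\alpha_7 \leq \tau-1$, that lemma yields $\alpha_6 \leq \alpha_7$, so in particular $\alpha_6 \leq \tau-1$. I also need $\alpha_6 > 0$ for the HM--AM step; this follows because $\alpha_6$ lives in the interval $[0,\tau)$ used throughout Lemma~\ref{VS-lem1} and the proof of Theorem~\ref{VSLNSMKV1}, together with the standing assumption $\tau>1$ after Theorem~\ref{thm3}.

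\textbf{Step 2 (Rewrite $\alpha_3$ as a harmonic mean).} Invoking Theorem~\ref{VSLNSMKV1} I would write
\[
\alpha_3 \;=\; \frac{2\tau\alpha_6}{\tau+\alpha_6},
\]
and observe that the right-hand side is precisely the harmonic mean $\mathrm{HM}(\tau,\alpha_6)$ of the two positive scalars $\tau$ and $\alpha_6$.

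\textbf{Step 3 (Apply HM $\leq$ AM and the bound from Step 1).} The HM--AM inequality then gives
\[
\alpha_3 \;=\; \mathrm{HM}(\tau,\alpha_6) \;\leq\; \frac{\tau+\alpha_6}{2},
\]
and substituting $\alpha_6 \leq \tau-1$ from Step~1 produces $\alpha_3 \leq \tfrac{\tau + (\tau-1)}{2} = \tau - \tfrac{1}{2}$, which is the claim. As a sanity check one can equivalently verify the direct algebraic form $\tfrac{2\tau(\tau-1)}{2\tau-1} \leq \tau-\tfrac12$, which after clearing the positive denominator $2\tau-1$ reduces to $0 \leq 1$.

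\textbf{Main obstacle.} There is no real obstacle here, only bookkeeping: the heavy lifting was done in Theorem~\ref{VSLNSMKV1} (identifying $\alpha_3$ as a harmonic mean of $\tau$ and $\alpha_6$) and in Lemma~\ref{OBV-a7} (bounding $\alpha_6$ by $\alpha_7$). The only point worth stating carefully in the writeup is that both $\tau$ and $\alpha_6$ are strictly positive, so that the HM--AM inequality is legitimately applicable. Note also that the reference in the statement to ``Lemma-\ref{OBV-Ek1}'' appears to be a typo for Lemma~\ref{OBV-a7}; I would silently correct this when citing.
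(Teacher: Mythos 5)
Your proposal is correct and follows essentially the same route as the paper, whose own argument is the short paragraph preceding the lemma: Lemma~\ref{OBV-a7} gives $\alpha_6\leq\alpha_7\leq\tau-1$, Theorem~\ref{VSLNSMKV1} identifies $\alpha_3$ as the harmonic mean $\frac{2\tau\alpha_6}{\tau+\alpha_6}$, and the HM--AM inequality finishes the bound. The only (harmless) difference is order of operations --- you apply HM $\leq$ AM before substituting $\alpha_6\leq\tau-1$, while the paper substitutes first and then compares $\frac{2\tau(\tau-1)}{2\tau-1}$ with $\tau-\frac12$ --- and you are right that the citation of ``Lemma-\ref{OBV-Ek1}'' in the statement is a mislabel for Lemma~\ref{OBV-a7}.
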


The above lemma had given an upper bound for $\alpha_3$ when $\alpha_6$  externally divides $\alpha_7$ and $\alpha_3$ on the real line. In the following, we derive an upper bound for $\alpha_3$ when $\alpha_6$ lies in between $\alpha_7$ and $\alpha_3$ on the real line.
Further, In what follows we use the notation
$$z:= \tau\|(I-yy^\ast)z\|^2 \frac{\|(A-\theta I)Vs\|^2}{\|s\|^2}$$ for the convenience.

\begin{lem}\label{OBV-SB}\relax
Let $\alpha_7 \leq \alpha_6$ and  the point $C(\alpha_6,\alpha_6z)$ divides the points $A(\alpha_7,g(\alpha_7))$ and $B(\alpha_3,0)$ internally in the ratio $m:n.$ Then, note that the following relations hold true:
\begin{equation}\label{OBVSRM-sec}
\frac{m}{n}=\frac{g(\alpha_7)/z-\alpha_7}{\alpha_3}=\frac{\alpha_6-\alpha_7}{\alpha_3-\alpha_6}.
\end{equation}
\end{lem}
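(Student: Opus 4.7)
The plan is to apply the classical two-dimensional section formula componentwise. Under the convention that ``$C$ divides $\overline{AB}$ internally in the ratio $m:n$'' means $|AC|:|CB| = m:n$, one has
\[
C \;=\; \frac{n\,A + m\,B}{m+n}.
\]
Substituting $A = (\alpha_7,\,g(\alpha_7))$, $B = (\alpha_3,\,0)$, and $C = (\alpha_6,\,\alpha_6 z)$, and equating coordinates, I would obtain the two scalar identities
\[
(m+n)\,\alpha_6 \;=\; n\,\alpha_7 + m\,\alpha_3, \qquad (m+n)\,\alpha_6\, z \;=\; n\, g(\alpha_7).
\]

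The second equality in the lemma follows at once from the $x$-coordinate relation: rearranging $(m+n)\alpha_6 = n\alpha_7 + m\alpha_3$ as $n(\alpha_6 - \alpha_7) = m(\alpha_3 - \alpha_6)$ and dividing gives $\tfrac{m}{n} = \tfrac{\alpha_6 - \alpha_7}{\alpha_3 - \alpha_6}$, which is the right-hand equality in (\ref{OBVSRM-sec}).

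For the first equality, I would combine the two scalar identities. From the $y$-coordinate equation, $(m+n)\alpha_6 = n\,g(\alpha_7)/z$; subtracting the $x$-coordinate equation $(m+n)\alpha_6 = n\alpha_7 + m\alpha_3$ from this eliminates $(m+n)\alpha_6$ and yields $n\,g(\alpha_7)/z - n\alpha_7 = m\alpha_3$. Dividing through by $n\alpha_3$ then produces $\tfrac{m}{n} = \tfrac{g(\alpha_7)/z - \alpha_7}{\alpha_3}$, which is the left-hand equality in (\ref{OBVSRM-sec}).

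No substantive obstacle is anticipated, since the statement is essentially the section formula read off twice. The only bookkeeping concern is nonvanishing of the divisors $z$, $\alpha_3$, and $\alpha_3 - \alpha_6$. The first holds from the definition of $z$ whenever the LLS correction is nontrivial (so that both $\|(I-yy^\ast)z\|$ and $\|(A-\theta I)Vs\|$ are nonzero); the other two follow from $0 < \alpha_7 \leq \alpha_6 < \alpha_3 < \tau$, using Lemma~\ref{VS-lem1} together with the internal-division hypothesis which forces $\alpha_6$ to lie strictly between $\alpha_7$ and $\alpha_3$.
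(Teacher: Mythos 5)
Your proposal is correct and follows essentially the same route as the paper: both apply the section formula componentwise to get $(m+n)\alpha_6 = n\alpha_7 + m\alpha_3$ and $(m+n)\alpha_6 z = n\,g(\alpha_7)$, read off $\tfrac{m}{n}=\tfrac{\alpha_6-\alpha_7}{\alpha_3-\alpha_6}$ from the first, and combine the two to get $\tfrac{m}{n}=\tfrac{g(\alpha_7)/z-\alpha_7}{\alpha_3}$ (the paper does the combination via the mediant property applied to $\tfrac{\alpha_6-\alpha_7}{\alpha_3-\alpha_6}=\tfrac{g(\alpha_7)/z-\alpha_6}{\alpha_6}$, you by subtracting the two expressions for $(m+n)\alpha_6$ --- a cosmetic difference). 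Your closing remarks on the nonvanishing of $z$, $\alpha_3$, and $\alpha_3-\alpha_6$ are a reasonable addition that the paper leaves implicit.
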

\begin{proof}
Note that $A,B,$ and $C$ are lies on the same straight line as $f(\alpha_3)=0,$ $\alpha_6z=f(\alpha_6)$ and  $f(\alpha_7)=g(\alpha_7)$ from the Theorem-\ref{VSLNSMKV1} and the Lemma-\ref{OBV-a7}. Further, as $C$ divides $A$ and $B$ internally in the ratio $m:n,$ we have
\begin{equation}\label{OBVSRM-MN}\relax
\Big(\frac{m\alpha_3+n\alpha_7}{m+n},\frac{ng(\alpha_7)}{m+n}\Big)=(\alpha_6,\alpha_6z).
\end{equation}
The above equation yields the following relations:
$$ \frac{m}{n} =\frac{\alpha_6-\alpha_7}{\alpha_3-\alpha_6}~\mbox{and}~\frac{m}{n}=\frac{g(\alpha_7)/z-\alpha_6}{\alpha_6} $$
The two relations in the above equation give:
$$\frac{m}{n}=\frac{g(\alpha_7)/z-\alpha_7}{\alpha_3}.$$
Hence the lemma proved.
\end{proof}

The Lemma-\ref{OBV-SB}  has found the ratio at which the point $C$ divides $A$ and $B.$ By using this the next theorem finds a relation between $\alpha_6,\alpha_7,$ and $ \tau.$
\begin{thm}\label{OBV-T67}\relax
Let $\alpha_3,\alpha_6,$ and $\alpha_7$ be the same as in the Lemma-\ref{OBV-SB}. Let $\tau\neq 1$ and $z =\tau \|(I-yy^\ast)z\|^2 \frac{\|(A-\theta I)Vs\|^2}{\|Vs\|^2} \neq 0.$ Then
\begin{equation}\label{SB-OBV-67}
\alpha_7=2\alpha_6-\tau.
\end{equation}
\end{thm}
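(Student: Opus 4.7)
The plan is to translate the two equivalent expressions for the ratio $m/n$ given in Lemma~\ref{OBV-SB} into a single algebraic identity relating $\alpha_3$, $\alpha_6$, $\alpha_7$, and $\tau$, and then to simplify using the closed-form expressions for $g$ and $\alpha_3$ already established.

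First, I would equate the two forms of $m/n$ from Lemma~\ref{OBV-SB}, namely
\begin{equation*}
\frac{g(\alpha_7)/z - \alpha_7}{\alpha_3} = \frac{\alpha_6 - \alpha_7}{\alpha_3 - \alpha_6},
\end{equation*}
and cross-multiply to obtain a single equation in $\alpha_3$, $\alpha_6$, $\alpha_7$, and the ratio $g(\alpha_7)/z$. Next, I would substitute $g(\alpha_7)/z = \alpha_7/(\tau-\alpha_7)$, obtained by dividing the rewritten form of $g$ in equation~(\ref{OBV4}) by the scalar $z$, and replace $\alpha_3$ by $\frac{2\tau\alpha_6}{\tau+\alpha_6}$ using the harmonic-mean relation from Theorem~\ref{VSLNSMKV1}, which in particular gives $\alpha_3 - \alpha_6 = \alpha_6(\tau-\alpha_6)/(\tau+\alpha_6)$. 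Clearing denominators then produces a polynomial identity in $\tau$, $\alpha_6$, and $\alpha_7$ alone, where the hypotheses $\tau\neq 1$, $\alpha_6\neq\tau$, and $z\neq 0$ are exactly what allow the common factors (such as $z$, $\tau-\alpha_7$, $\tau-\alpha_6$, and $\alpha_6$) to be cancelled.

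Finally, I would rearrange the resulting identity into a ratio of the form $\frac{\tau+\alpha_6}{\tau-\alpha_6} = \frac{\cdots}{\cdots}$ and then invoke componendo and dividendo --- exactly the device used in the proof of Theorem~\ref{VSLNSMKV1} to convert $\frac{\tau+\alpha_6}{\tau-\alpha_6}=\frac{\tau}{\tau-\alpha_3}$ into the harmonic-mean formula for $\alpha_3$ --- to expose the symmetric affine relation $2\alpha_6 = \alpha_7 + \tau$, which is precisely equation~(\ref{SB-OBV-67}).

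The main obstacle will be this last step. Unlike the derivation in Theorem~\ref{VSLNSMKV1}, where the ratio involving $\tau-\alpha_3$ emerges almost immediately, here the presence of both $\tau-\alpha_7$ and $\tau-\alpha_6$ makes the correct regrouping less transparent; one has to find the grouping in which componendo-and-dividendo collapses the identity to the linear arithmetic-mean form $\alpha_7 = 2\alpha_6 - \tau$ rather than leaving a residual quadratic factor in $\alpha_7$. The hypothesis $\tau\neq 1$ should be the key ingredient in guaranteeing that this collapse is unambiguous.
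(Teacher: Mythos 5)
Your plan has a genuine gap at its final step, and it is also not the route the paper takes. The two expressions for $m/n$ in Lemma-\ref{OBV-SB} are consequences of the collinearity of the three points $(\alpha_7,f(\alpha_7))$, $(\alpha_6,f(\alpha_6))$, $(\alpha_3,0)$ on the graph of the affine function $f$ in (\ref{eq-OBVnew}); equating them and then substituting $g(\alpha_7)/z=\alpha_7/(\tau-\alpha_7)$ and $\alpha_3=2\tau\alpha_6/(\tau+\alpha_6)$ therefore returns nothing more than the defining equation $f(\alpha_7)=g(\alpha_7)$ in disguise. Concretely, carrying out your substitutions and clearing denominators (cancelling $z$, $\tau+\alpha_6$ and $\alpha_6$) yields
\[
\alpha_7\,(1+\alpha_7-\tau)\,(\tau-\alpha_6)\;=\;2\tau\,(\alpha_6-\alpha_7)\,(\tau-\alpha_7),
\]
which is genuinely quadratic in $\alpha_7$. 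No regrouping or componendo--dividendo can collapse a single quadratic in $\alpha_7$ to the linear relation $\alpha_7=2\alpha_6-\tau$: substituting $\alpha_7=2\alpha_6-\tau$ into the display shows that it is satisfied if and only if $4\alpha_6^2-(2\tau-2)\alpha_6-2\tau^2-\tau=0$, which is precisely the quadratic that Lemma-\ref{VSB6T} later \emph{derives by assuming the present theorem}. So the step you flag as ``the main obstacle'' is not a matter of finding the right grouping; the linear relation is simply not a consequence of the one equation you have assembled, and the hypothesis $\tau\neq 1$ does not rescue it.

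The paper's own proof proceeds differently: it never invokes the closed form $g(\alpha)/z=\alpha/(\tau-\alpha)$ at this stage, only the identification $g(\alpha_7)=f(\alpha_7)$. It writes $\tau+2\alpha_6=\tau+\alpha_6+\alpha_7+(\alpha_6-\alpha_7)$, replaces $\alpha_6-\alpha_7$ via (\ref{OBVSRM-sec}), uses $f(2\alpha_6-\tau)=(\tau+2\alpha_6)z$ together with $(\tau+\alpha_6)\alpha_3=2\tau\alpha_6$, and then argues by contradiction: if $\alpha_7\neq 2\alpha_6-\tau$, the constancy of the difference quotient of the affine $f$ (slope $-\tau z/(\tau-\alpha_3)$) would force $\alpha_3=\alpha_6$ and hence $\alpha_6=\tau$, contrary to hypothesis. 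Whatever one thinks of that argument, the lesson for your write-up is that some input beyond the section formula and the closed form of $g$ at a single point must be injected --- in the paper this is the slope identity for $f$ --- because the section-formula relations alone are identities that hold for every point of the line and cannot single out $\alpha_7=2\alpha_6-\tau$.
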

\begin{proof}
From the equation (\ref{OBVSRM-sec}) note that
$$(\alpha_6-\alpha_7) =\Big(\frac{g(\alpha_7)/z-\alpha_7}{\alpha_3}\Big)(\alpha_3-\alpha_6).$$
Thus,
\begin{eqnarray}
 \tau+2\alpha_6 =\tau+\alpha_6+\alpha_7+(\alpha_6-\alpha_7)
=\tau+\alpha_6+\alpha_7+\Big(\frac{g(\alpha_7)/z-\alpha_7}{\alpha_3}\Big)(\alpha_3-\alpha_6). \nonumber
\end{eqnarray}
Recall from the equations (\ref{SV14}) and (\ref{eq-OBVnew}) that $f(2\alpha_6-\tau)=(\tau+2\alpha_6)z.$ Thus, as $f(\alpha_7)=g(\alpha_7)$ the above equation gives the relation:
\begin{eqnarray}\label{OBVSRM-67}\relax
f(2\alpha_6-\tau)=(\tau+\alpha_6+\alpha_7)z+
\Big(\frac{f(\alpha_7)/z-\alpha_7}{\alpha_3}\Big)(\alpha_3-\alpha_6)z\nonumber 
\\ =(\tau+\alpha_6)z+f(\alpha_7)-\Big(\frac{f(\alpha_7)/z-\alpha_7}{\alpha_3}\Big)\alpha_6 z.
\end{eqnarray}
Therefore, 
%the equation is proved.
\begin{equation}\label{OBV-Ek1}
f(2\alpha_6-\tau)-f(\alpha_7)=(\tau+\alpha_6)z-\Big(\frac{f(\alpha_7)/z-\alpha_7}{\alpha_3}\Big)\alpha_6 z.
\end{equation}
Now, by using the facts $(\tau+2\alpha_6)z =f(2\alpha_6-\tau)$ and $\alpha_3=2\tau\alpha_6/(\tau+\alpha_6)$ observe that 
\begin{eqnarray}
(\tau+\alpha_6)\alpha_3-\big(f(2\alpha_6-\tau)/z-(2\alpha_6-\tau)\big)\alpha_6=(\tau+\alpha_6)\alpha_3-2\tau=0.\nonumber
\end{eqnarray}
%In the above equation, we used the facts that  
Substituting this in the equation (\ref{OBV-Ek1}) gives the following relation:
$$f(2\alpha_6-\tau)-f(\alpha_7)=\Big(\frac{\big(f(2\alpha_6-\tau)/z-(2\alpha_6-\tau)\big)-\big(f(\alpha_7)/z-\alpha_7\big)}{\alpha_3}\Big)\alpha_6z.$$
This can be written as follows:
$$\Big(f(2\alpha_6-\tau)-f(\alpha_7)\Big)\Big(\frac{\alpha_3-\alpha_6}{\alpha_6}\Big)=\frac{\big(\alpha_7-(2\alpha_6-\tau)\big)\alpha_6z}{\alpha_3}.$$
As $\alpha_3$ is harmonic mean of $\alpha_6$ and $\tau$ we have $\frac{\alpha_3-\alpha_6}{\alpha_6}=\frac{\tau-\alpha_3}{\tau}.$ Substituting this in the above equation gives
$$\Big(f(2\alpha_6-\tau)-f(\alpha_7)\Big)\Big(\frac{\tau-\alpha_3}{\tau}\Big)=\frac{\big(\alpha_7-(2\alpha_6-\tau)\big)\alpha_6z}{\alpha_3}.$$
Now, we prove that if $2\alpha_6-\tau \neq \alpha_7$ then $\alpha_6=\alpha_3=\tau.$
Recall from the equation (\ref{eq-OBVnew}) that $f(\alpha)$ is a straightline in the variable $\alpha$ and its slope is $\frac{-\tau z}{\tau-\alpha_3}.$ Thus, $\frac{f(x_1)-f(x_2)}{x_1-x_2}$ is constant for any $x_1$ and $x_2.$ Using this observe that $\alpha_6=\alpha_3$ from the above equation. But this implies $\alpha_3=\tau$ as $\alpha_3=\frac{2\tau\alpha_6}{\tau+\alpha_6}.$ A contradiction to the assumption that $\alpha_3 \neq \tau.$ Note that if $\alpha_3=\tau$ then either $\tau=1$ or $(\theta,Vy)$ is an exact eigenpair of $A.$ Therefore $\alpha_7=2\alpha_6-\tau.$
\end{proof}

By using the above theorem the following lemma gives the values of $\alpha_6$ and $\alpha_3$ in terms of $\tau,$ when $\alpha_6$ stays in between $\alpha_7$ and $\alpha_3$ as mentioned in the Lemma-\ref{OBV-SB}.
\begin{lem}\label{VSB6T}\relax
Let $g(\alpha)$ and $f(\alpha)$ be functions of $\alpha$ defined as in the equations (\ref{OBV4}) and (\ref{eq-OBVnew}) respectively. Assume that scalars $\alpha_3,$ $\alpha_6,$ and $\alpha_7$ are the same as in the Lemma-\ref{OBV-SB}. Then
\begin{equation}\label{OBV-a3T}\relax
\alpha_3 \leq \tau-\frac{1}{14},
\end{equation}
if the point $(\alpha_6,f(\alpha_6))$ internally divides $(\alpha_7,f(\alpha_7))$ and $(\alpha_3,0).$
\end{lem}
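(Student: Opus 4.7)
The plan is to combine the closed-form identities $\alpha_3 = 2\tau\alpha_6/(\tau+\alpha_6)$ from Theorem~\ref{VSLNSMKV1} and $\alpha_7 = 2\alpha_6-\tau$ from Theorem~\ref{OBV-T67} with the defining equation $f(\alpha_7)=g(\alpha_7)$ of $\alpha_7$. This consistency condition pins $\alpha_6$ down as an explicit function of $\tau$, and the claim then reduces to a polynomial inequality in $\tau$ alone, which is easy to verify using $\tau\ge 1$ from Theorem~\ref{thm3}.

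Setting $u := \tau-\alpha_6$, the two identities above give $\tau-\alpha_7 = 2u$ and $\tau-\alpha_3 = \tau u/(2\tau-u)$. Substituting these, together with the explicit forms~(\ref{eq-OBVnew}) of $f$ and~(\ref{OBV4}) of $g$, into $f(\alpha_7)=g(\alpha_7)$, the common factor $z = \tau\|(I-yy^{\ast})z\|^2\|(A-\theta I)Vs\|^2/\|s\|^2$ cancels from both sides, and elementary simplification collapses the identity to the quadratic
\begin{equation*}
4u^2 - 2(3\tau+1)u + \tau \;=\; 0.
\end{equation*}
The root compatible with $\alpha_6\in(0,\tau)$ is $u = \big((3\tau+1)-\sqrt{9\tau^2+2\tau+1}\big)/4$. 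Multiplying the numerator and denominator of this $u$ by its conjugate and substituting into $\tau-\alpha_3 = \tau u/(2\tau-u)$ produces, after cancellation, the closed form
\begin{equation*}
\tau-\alpha_3 \;=\; \frac{\tau}{(6\tau+1)+2\sqrt{9\tau^2+2\tau+1}}.
\end{equation*}

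Consequently, the desired bound $\alpha_3\le \tau-\tfrac{1}{14}$ is equivalent to $14\tau \ge (6\tau+1)+2\sqrt{9\tau^2+2\tau+1}$, i.e., $8\tau-1 \ge 2\sqrt{9\tau^2+2\tau+1}$. Since $\tau\ge 1$ by Theorem~\ref{thm3}, both sides are positive, so squaring preserves the inequality and the claim reduces to $28\tau^2 - 24\tau - 3 \ge 0$. This polynomial takes the value $1$ at $\tau=1$ and has derivative $56\tau-24 > 0$ on $[1,\infty)$, so it stays nonnegative throughout the admissible range of $\tau$, finishing the proof.

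The main obstacle is the algebraic bookkeeping: reducing $f(\alpha_7)=g(\alpha_7)$ to a clean quadratic in $u$, and then rationalizing the resulting surd to extract a usable closed form for $\tau-\alpha_3$. Once that closed form is in hand, the final polynomial inequality is routine, and it is worth noting that the constant $1/14$ is essentially sharp---the true extremal value is $7-4\sqrt{3}\approx 0.0718$, attained as $\tau\downarrow 1$.
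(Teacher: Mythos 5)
Your proposal is correct, and up to the derivation of the quadratic it follows the same route as the paper: the paper also combines $f(\alpha_7)=g(\alpha_7)$ with $\alpha_7=2\alpha_6-\tau$ and $f(2\alpha_6-\tau)=(\tau+2\alpha_6)z$ to obtain $4\alpha_6^2-(2\tau-2)\alpha_6-2\tau^2-\tau=0$, which is exactly your $4u^2-2(3\tau+1)u+\tau=0$ under $u=\tau-\alpha_6$, and then substitutes the admissible root into $\alpha_3=2\tau\alpha_6/(\tau+\alpha_6)$. Where you genuinely diverge, and improve on the paper, is the last step: the paper stops at $\alpha_3=2\tau\bigl(\tau-1+\sqrt{9\tau^2+2\tau+1}\bigr)/\bigl(5\tau-1+\sqrt{9\tau^2+2\tau+1}\bigr)$ and justifies $\alpha_3\le\tau-\tfrac{1}{14}$ only by ``plotting the graph using MATLAB or DESMOS,'' which is not a proof. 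Your rationalization to $\tau-\alpha_3=\tau/\bigl((6\tau+1)+2\sqrt{9\tau^2+2\tau+1}\bigr)$, the reduction to $8\tau-1\ge 2\sqrt{9\tau^2+2\tau+1}$, and the final check that $28\tau^2-24\tau-3\ge 1$ on $[1,\infty)$ supply the missing analytic argument (I verified the algebra; it is sound, and both sides are indeed positive before squaring since $\tau\ge 1$). Your closing observation that the bound is nearly sharp, with extremal value $7-4\sqrt{3}\approx 0.0718$ at $\tau=1$ because $\tau\mapsto\tau-\alpha_3$ is increasing in $\tau$, is also correct and is information the paper does not record. In short: same skeleton, but your version closes a real gap in the paper's own proof.
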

\begin{proof}
By using the equation (\ref{OBV4}) we have
$$g(2\alpha_6-\tau)=\frac{(2\alpha_6-\tau)z}{2(\tau-\alpha_6)}.$$
From the proof of the previous theorem note that  $f(2\alpha_6-\tau)=(\tau+2\alpha_6)z.$ Since $\alpha_7=2\alpha_6-\tau$ and $f(\alpha_7)=g(\alpha_7)$ this implies
$$(\tau+2\alpha_6)z=f(2\alpha_6-\tau)= g(2\alpha_6-\tau)=\frac{(2\alpha_6-\tau)z}{2(\tau-\alpha_6)}.$$
As $z \neq 0,$ on simplifying this gives the following quadratic equation in  $\alpha_6:$
$$4\alpha_6^2-(2\tau-2)\alpha_6-2\tau^2-\tau=0 .$$
As $\alpha_6 \leq \tau,$ the above equation gives 
%the value of $\alpha_6$ in terms of $\tau.$
$$\alpha_6=\frac{(\tau-1)+\sqrt{(\tau-1)^2+8\tau^2+4\tau}}{4}.$$
Thus, by using $\alpha_3=\frac{2\tau \alpha_6}{\tau+\alpha_6}$ we have
$$\alpha_3= \frac{2\tau\big(\tau-1+\sqrt{9\tau^2+2\tau+1}\big)}{5\tau-1+\sqrt{9\tau^2+2\tau+1}}.$$
As $\tau > 1,$  the right-hand side expression in the above equation is less than $\tau-\frac{1}{14}.$ This can be seen by plotting the graph by using any software such as MATLAB or DESMOS online grapher etc.
\end{proof}

From the Lemmas-\ref{OBV-a3} and \ref{VSB6T} observe that the inequality $\alpha_3 \leq \tau-\frac{1}{14}$ holds true, irrespective of the position of $\alpha_6$ with respect to the scalars $\alpha_3$ and $\alpha_7.$ We use this in the next section to find a bound for the ratio of residual norms in the refined and Rayleigh-Ritz projection methods. 
%\section{OSB-MR}
\section{Main results}
Recall from the equation (\ref{eq-VS4}) that $\frac{K-\sigma^2}{\sigma^2}=\frac{\tau^2 \|(I-yy^\ast)z\|^2}{\tau-\alpha_3}.$ Here, we used the fact that $\|s\|^2=1+\tau^2 \|(I-yy^\ast)z\|^2.$ Then using $\alpha_3 \leq \tau-1/14$ we have 
$$\frac{K}{\sigma^2} \leq 14\tau^2\|(I-yy^\ast)z\|^2+1.$$
This together with the Theorem-\ref{thms2} gives the result that we state in the form of a lemma here.
\begin{lem}\label{flemsv}\relax
Let $\tau$ and a vector $(I-yy^\ast )z$ be the same as in the Theorem-\ref{thm1bnew}. Also assume that $Vz_R$ is a refined Ritz
vector corresponding to the Ritz value $\theta.$ Then
\begin{equation*}
1+\tau \|(I-y y^\ast)z\|^2 \leq \frac{K}{\sigma^2} \leq 14\tau^2\|(I-yy^\ast)z\|^2+1,
\end{equation*}
 where $\sigma^2 =\|(A-\theta
I)Vz_R\|^2,$ $K= \|(A-\theta I)Vm\|^2,$ and $m$ is a
solution vector of a least squares problem~(\ref{E2}).
\end{lem}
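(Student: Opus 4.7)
My plan is to assemble the two inequalities directly from the machinery already developed: the lower bound falls out of Theorem \ref{thms2}, while the upper bound is obtained by combining the identity in equation (\ref{eq-VS4}) with the universal estimate $\alpha_3 \leq \tau - 1/14$ established jointly in Lemmas \ref{OBV-a3} and \ref{VSB6T}. The bridge quantity is $(K-\sigma^2)/\sigma^2$: I would bound it from below and above, then add $1$ to both sides to translate the bounds into statements about $K/\sigma^2$.

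For the lower bound, I would simply invoke Theorem \ref{thms2}, which gives $\frac{K-\sigma^2}{\sigma^2} > \tau\,\|(I-yy^\ast)z\|^2$, and therefore
\begin{equation*}
\frac{K}{\sigma^2} \; \geq \; 1 + \tau\,\|(I-yy^\ast)z\|^2.
\end{equation*}
No further work is needed for this half.

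For the upper bound, I would start from equation (\ref{eq-VS4}), namely $\frac{K-\sigma^2}{\sigma^2} = \frac{\|s\|^2-1}{\tau-\alpha_3}$, and substitute the identity $\|s\|^2 = 1 + \tau^2\|(I-yy^\ast)z\|^2$ (which follows from $s = y + \tau(I-yy^\ast)z$ together with $\|y\|=1$ and the orthogonality $y \perp (I-yy^\ast)z$). This yields
\begin{equation*}
\frac{K-\sigma^2}{\sigma^2} \; = \; \frac{\tau^2\,\|(I-yy^\ast)z\|^2}{\tau-\alpha_3}.
\end{equation*}
Now the structural bound $\alpha_3 \leq \tau - 1/14$ --- which holds irrespective of whether $\alpha_6$ divides $\alpha_7$ and $\alpha_3$ externally (Lemma \ref{OBV-a3}) or internally (Lemma \ref{VSB6T}) --- gives $\tau - \alpha_3 \geq 1/14$, hence $\frac{K-\sigma^2}{\sigma^2} \leq 14\tau^2\|(I-yy^\ast)z\|^2$. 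Adding $1$ to both sides produces the claimed upper bound.

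Conceptually, the substantive work has already been carried out in the sectional-formulae section, where the roots $\alpha_3$, $\alpha_6$, $\alpha_7$ were introduced and the constant $1/14$ was extracted through a case analysis on the geometric configuration of these roots on the real line. Once that structural estimate is in hand, the lemma itself is nothing more than a short chain of substitutions, and I do not anticipate any genuine obstacle in the proof of the lemma per se; the difficulty is entirely upstream in securing $\alpha_3 \leq \tau - 1/14$ from the quadratic arising in Lemma \ref{VSB6T}.
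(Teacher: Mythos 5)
Your proposal is correct and follows essentially the same route as the paper: the lower bound is Theorem~\ref{thms2} plus adding $1$, and the upper bound comes from equation~(\ref{eq-VS4}) with $\|s\|^2=1+\tau^2\|(I-yy^\ast)z\|^2$ and the estimate $\alpha_3\leq\tau-\tfrac{1}{14}$ drawn from Lemmas~\ref{OBV-a3} and~\ref{VSB6T}. This matches the derivation the paper gives in the paragraph preceding the lemma.
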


The Lemma-\ref{flemsv} has established the relation between residual norms in LLS and refined projection methods. It has shown that the residual norms in both the methods converge to zero together.
%if and only if the residual norms in the LLS method are converge to zero. 
Now, recall from the equations (\ref{E4}) and (\ref{E17}) that $$\|(A-\theta I)Vm\|^2 \leq \|(A-\theta I)Vy\|^2 \leq \frac{\tau \|(A-\theta I)Vm\|^2}{\tau-1}.$$ 

By using this equation, the Lemma-\ref{flemsv} gives the following main result which relates residual norms in Rayliegh-Ritz and refined projection methods.
\begin{thm}\label{mainSV}\relax
Let  $\theta$ be a Ritz value, and $Vy,~Vz_R$ be the corresponding eigenvector approximations in the Rayleigh-Ritz and refined projection methods respectively. Then
\begin{equation*}
 \frac{(\tau-1) \|(A-\theta I)Vy\|^2}{\tau (14\tau^2\|(I-yy^\ast)z\|^2+1)} \leq \sigma^2 \leq \frac{\|(A-\theta I)Vy\|^2}{1+\tau \|(I-y y^\ast)z\|^2 }.
\end{equation*}
\end{thm}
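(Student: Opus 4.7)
The plan is to derive the desired two-sided bound on $\sigma^{2}$ by simply chaining together the two inequalities that have already been established immediately before the theorem statement. At this point in the paper, two independent pieces are available: Lemma~\ref{flemsv} sandwiches the ratio $K/\sigma^{2}$ between $1+\tau\|(I-yy^\ast)z\|^{2}$ and $14\tau^{2}\|(I-yy^\ast)z\|^{2}+1$, and the equations~(\ref{E4}) together with~(\ref{E17}) sandwich $K = \|(A-\theta I)Vm\|^{2}$ between the Rayleigh-Ritz residual norm squared and a $\tau/(\tau-1)$-multiple thereof. So the only work left is to feed one sandwich into the other in the correct direction for each bound.

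For the upper bound on $\sigma^{2}$, I would start from the lower half of Lemma~\ref{flemsv}, which yields
\[
\sigma^{2} \;\leq\; \frac{K}{\,1+\tau\|(I-yy^\ast)z\|^{2}\,}.
\]
Then I would apply the left inequality in $\|(A-\theta I)Vm\|^{2} \leq \|(A-\theta I)Vy\|^{2}$ (i.e.\ $K \leq \|(A-\theta I)Vy\|^{2}$, which is just~(\ref{E4})) to replace $K$ in the numerator, producing the stated right-hand bound.

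For the lower bound, I would use the upper half of Lemma~\ref{flemsv}, namely
\[
\sigma^{2} \;\geq\; \frac{K}{\,14\tau^{2}\|(I-yy^\ast)z\|^{2}+1\,},
\]
and then invoke $\|(A-\theta I)Vy\|^{2} \leq \tau\,\|(A-\theta I)Vm\|^{2}/(\tau-1)$ from~(\ref{E17}), rearranged as $K \geq (\tau-1)\|(A-\theta I)Vy\|^{2}/\tau$. Substituting this into the numerator gives the stated left-hand bound.

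There is no real obstacle here; both directions are one-line substitutions. The only thing to be careful about is to orient each of the two sandwiches the right way, so that the weaker side of Lemma~\ref{flemsv} is paired with the weaker side of the $K$-versus-$\|(A-\theta I)Vy\|^{2}$ comparison in each case, and to note that the assumption $\tau > 1$ (already justified after Theorem~\ref{thm3}) keeps the factor $(\tau-1)/\tau$ strictly positive, so the lower bound is meaningful.
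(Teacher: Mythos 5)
Your proposal is correct and follows exactly the route the paper takes: it chains the two-sided bound on $K/\sigma^2$ from Lemma~\ref{flemsv} with the sandwich $\|(A-\theta I)Vm\|^2 \leq \|(A-\theta I)Vy\|^2 \leq \tau\|(A-\theta I)Vm\|^2/(\tau-1)$ obtained from equations~(\ref{E4}) and~(\ref{E17}), orienting each inequality appropriately. The paper presents this same combination (without a formal proof environment) in the paragraph immediately preceding the theorem, so there is nothing to add.
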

The above theorem relates residual norms in the Rayleigh-Ritz and refined projection methods. It helps us to predict the range of $\sigma^2,$ square of a  residual norm in the
refined projection method without computing a refined Ritz vector,
a right singular vector of $(A-\theta I)V.$ It just requires computing
 $\tau$ and $(I-yy^\ast )z.$ Note that $(I-yy^\ast)z$ is obtained by solving  normal equations for the
least squares problem in equation~(\ref{E2}). $\tau,$ a solution of the
problem considered in the equation~(\ref{E3}) is obtained by
solving an eigenvalue problem  of the following  matrix of order $2.$
$$B^\ast V^\ast (A-\theta I)^\ast (A-\theta I)VB,~~~\mbox{where}~~B= \Big[y~~ \frac{(I-yy^\ast)z}{\|(I-yy^\ast)z\|}\Big].$$
The above theorem may helps to create an efficient algorithm that use a
%for solving linear eigenvalue problems by using 
combination of refined projection and Rayleigh-Ritz projection methods for solving sparse linear eigenvalue problems.
\section{Numerical experiments}In this section, we demonstrate the theory developed so far. This section been divided into two subsections. The first part discusses a method to compute $\tau$ and $\|(I-yy^\ast)z\|.$ The second part reports numerical results.
\subsection{Implementation details}
In this section, we discuss how the LLS method obtains eigenvector approximations as the LLS method is the most recent one. The following theorem will be helpful to compute an eigenvector approximation in the Least squares and line search(LLS) method.
\begin{thm}\label{obs1}\relax
Let $(\theta,Vy)$ be a Ritz pair but not an exact eigenpair of $A$. Let the vector $x$ satisfies equation
\begin{equation}\label{eq17}\relax
V^\ast (A-\theta I)^\ast (A-\theta I)Vx = y.
\end{equation}
Then
$$\frac{V\big(y+K(I-yy^\ast )x\big)}{\|V\big(y+K(I-yy^\ast )x\big)\|}$$ 
is the corresponding eigenvector approximation in the least squares method,
where 
%$K$ satisfies equation
\begin{equation}\label{eq19}\relax
K = \|(A-\theta I)Vy\|^2 +K \langle (A-\theta I)V(I-yy^\ast )x,(A-\theta I)Vy \rangle.
\end{equation}
\end{thm}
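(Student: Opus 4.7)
The plan is to show that the vector $y + K(I-yy^\ast)x$ coincides with the least-squares vector $m = y + (I-yy^\ast)z$ from~(\ref{E2}), from which the eigenvector-approximation claim follows by normalization; the scalar identity~(\ref{eq19}) then drops out of a single inner-product computation.

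First I would invoke equation~(\ref{s1}): the least-squares solution satisfies
\[
V^\ast (A-\theta I)^\ast (A-\theta I) V\, m \;=\; K_0\, y, \qquad K_0 := \|(A-\theta I)Vm\|^2.
\]
The hypothesis~(\ref{eq17}) gives $V^\ast (A-\theta I)^\ast (A-\theta I) V x = y$, so after scaling by $K_0$ both $m$ and $K_0 x$ satisfy the same linear system. Subtracting and then pairing with $m - K_0 x$ yields $\|(A-\theta I)V(m - K_0 x)\|^2 = 0$; in the generic situation where $(A-\theta I)V$ has full column rank, this forces $m = K_0 x$. Projecting by $(I-yy^\ast)$ and using $\|y\|=1$, I obtain $(I-yy^\ast)z = K_0 (I-yy^\ast)x$, and hence $m = y + K_0(I-yy^\ast)x$. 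Since the least-squares eigenvector approximation is $Vm/\|Vm\|$, the stated formula follows with $K = K_0 = \|(A-\theta I)Vm\|^2$.

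To confirm the fixed-point identity~(\ref{eq19}), I would take the inner product with $y$ on both sides of $V^\ast (A-\theta I)^\ast (A-\theta I) V m = K y$; using $\|y\|=1$ this yields $\langle (A-\theta I)Vm,(A-\theta I)Vy\rangle = K$. Substituting $m = y + K(I-yy^\ast)x$ and expanding the left-hand side immediately produces
\[
K \;=\; \|(A-\theta I)Vy\|^2 \;+\; K\,\langle (A-\theta I)V(I-yy^\ast)x,\,(A-\theta I)Vy\rangle,
\]
which is exactly~(\ref{eq19}).

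The main obstacle is the step $m = K_0 x$: strictly, one only obtains $m - K_0 x \in \ker((A-\theta I)V)$. The assumption that $(\theta, Vy)$ is not an exact eigenpair guarantees $y \notin \ker((A-\theta I)V)$ and keeps $K_0$ nonzero, but full well-posedness of~(\ref{eq17}) requires the Gram matrix $V^\ast(A-\theta I)^\ast(A-\theta I)V$ to be invertible; this is the implicit non-degeneracy one needs. Any residual kernel component in $m - K_0 x$ would correspond to an exact eigenvector direction for~$\theta$ living inside the subspace $\mathcal{V}$, the degeneracy one rules out in practice when speaking of a genuine least-squares approximation.
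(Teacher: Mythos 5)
Your argument is correct, and it is worth noting that the paper itself gives no proof of Theorem~\ref{obs1} at all: it simply cites Theorem~4 of \cite{Someshwaragift2}. So there is no in-text proof to match against; what you have produced is a self-contained derivation built entirely from machinery already present in the paper, namely equation~(\ref{s1}) (which encodes the normal equations~(\ref{E5}) in the form $V^\ast(A-\theta I)^\ast(A-\theta I)Vm=Ky$ with $K=\|(A-\theta I)Vm\|^2$). Comparing $Gm=Ky$ with $Gx=y$ for $G=V^\ast(A-\theta I)^\ast(A-\theta I)V$, concluding $m=Kx$, projecting by $I-yy^\ast$ to get $(I-yy^\ast)z=K(I-yy^\ast)x$, and then reading off~(\ref{eq19}) by pairing $Gm=Ky$ with $y$ is exactly the right chain, and the final inner-product expansion does reproduce~(\ref{eq19}) (note $K$ is real, so no conjugation issues arise when it is pulled out of the first slot).

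The one caveat you raise is genuine and deserves to be stated explicitly: the hypothesis that $(\theta,Vy)$ is not an exact eigenpair only guarantees $(A-\theta I)Vy\neq 0$; it does not by itself make $G$ invertible, which is what both the solvability of~(\ref{eq17}) and the step $m=Kx$ require. The saving observation is that if $G$ is singular then the smallest singular value of $(A-\theta I)V$ is zero, i.e.\ $\sigma=0$ and the refined Ritz vector is already an exact eigenvector in $\mathcal{V}$ --- a degenerate situation the paper implicitly excludes throughout (for instance, it divides by $\sigma^2$ in Theorem~\ref{thms2} and Lemma~\ref{flemsv}). An alternative, slightly more economical route that avoids discussing $\ker((A-\theta I)V)$ is to work directly with the reduced normal equations~(\ref{E5}): from $Gx=y$ one gets $(I-yy^\ast)G\bigl((I-yy^\ast)x\bigr)=-(y^\ast x)(I-yy^\ast)Gy$, so $(I-yy^\ast)x/(y^\ast x)$ solves~(\ref{E5}) and hence equals $(I-yy^\ast)z$, with $K=1/(y^\ast x)$; but your version is equally valid under the same non-degeneracy assumption, and it has the advantage of identifying $K$ immediately with $\|(A-\theta I)Vm\|^2$, the quantity the paper uses in Lemma~\ref{flemsv}.
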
  
For the proof of the theorem; See Theorem-4 in \cite{Someshwaragift2}. Using the above theorem the LLS method computes a vector $\frac{(I-yy^\ast)z}{\|(I-yy^\ast)z\|}$ without computing $(I-yy^\ast)z$ explicitly. 

The LLS technique further improves an eigenvector approximation in the Theorem-\ref{obs1}  by using the Line-Search technique introduced in the Theorem-\ref{thm1bnew}. As mentioned in the previous section, LLS obtains it by solving the following eigenvalue problem of a matrix of order $2.$ 
$$B^\ast V^\ast (A-\theta I)^\ast (A-\theta I)VB,~~~\mbox{where}~~B= \Big[y~~ \frac{(I-yy^\ast)z}{\|(I-yy^\ast)z\|}\Big].$$ 

From the Lemma-\ref{flemsv} note that $\tau$ and $\|(I-yy^\ast)z\|^2$ are required to compare residual norms in the Rayleigh-Ritz projection, LLS, and refined projection methods. Observe that $\tau$ can be computed from the equation (\ref{E14}) since $\frac{\|(A-\theta I)Vs\|^2}{\|Vs\|^2}$ and $\frac{(I-yy^\ast)z}{\|(I-yy^\ast)z\|}$ are known.
% But, the LLS method does not compute $\tau$ and $(I-yy^\ast)z$ explicitly. However, I
The following lemma describes a procedure to compute $\|(I-yy^\ast)z\|^2.$

\begin{lem}\label{obs2}\relax
Let $s$ be 
an eigenvector of the matrix $B^\ast V^\ast (A-\theta I)^\ast (A-\theta I)VB,$ where $B= \Big[y~~ \frac{(I-yy^\ast)z}{\|(I-yy^\ast)z\|}\Big].$ If $\|s\|=1$ and $s=\frac{y+\tau(I-yy^\ast)z}{\sqrt{1+\tau^2 \|(I-yy^\ast)z\|^2}}$ then
$$\tau^2 \|(I-yy^\ast)z\|^2 = \frac{\|(I-yy^\ast)s\|^2}{1-\|(I-yy^\ast)s\|^2}.$$
%are same in the Theorem. 
\end{lem}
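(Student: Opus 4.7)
The plan is to prove Lemma \ref{obs2} by a direct computation, substituting the assumed expression for $s$ into $(I-yy^\ast)s$ and then simplifying. The statement is essentially an algebraic identity: once we accept the form $s = \frac{y+\tau(I-yy^\ast)z}{\sqrt{1+\tau^2\|(I-yy^\ast)z\|^2}}$ and the normalization $\|y\|=1$, nothing deep is needed beyond exploiting that $I-yy^\ast$ is an orthogonal projector onto the complement of $\mathrm{span}\{y\}$.

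First, I would apply the projector $I-yy^\ast$ to the given expression for $s$. Two observations make this immediate: since $\|y\|=1$ we have $(I-yy^\ast)y = y - y(y^\ast y) = 0$, and since the same condition gives $(I-yy^\ast)^2 = I-yy^\ast$, the second term is unaffected except for the factor $\tau$. So $(I-yy^\ast)s = \dfrac{\tau (I-yy^\ast)z}{\sqrt{1+\tau^2\|(I-yy^\ast)z\|^2}}$, whence
\[
\|(I-yy^\ast)s\|^2 \;=\; \frac{\tau^2 \|(I-yy^\ast)z\|^2}{1+\tau^2\|(I-yy^\ast)z\|^2}.
\]

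Next, I would compute $1-\|(I-yy^\ast)s\|^2 = \dfrac{1}{1+\tau^2\|(I-yy^\ast)z\|^2}$, and divide the previous display by this quantity; the denominators cancel and one obtains exactly $\tau^2\|(I-yy^\ast)z\|^2$, which is the claimed identity.

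There is no real obstacle to the proof itself; the only point that demands care is the implicit identification of the $2$-dimensional eigenvector of $B^\ast V^\ast(A-\theta I)^\ast(A-\theta I)VB$ with the vector $s = y+\tau(I-yy^\ast)z$ (up to normalization) written in the coordinates of $\mathrm{span}\{y,(I-yy^\ast)z/\|(I-yy^\ast)z\|\}$. Since the columns of $B$ are orthonormal, $B^\ast B = I$, and the identification preserves norms, which justifies the hypothesis $\|s\|=1$ and makes the computation above legitimate. Thus the lemma is really a rewriting: from a computed $s$ (available from the $2\times 2$ eigenproblem) one can immediately read off $\tau^2 \|(I-yy^\ast)z\|^2$ without ever forming $(I-yy^\ast)z$ explicitly, which is exactly the implementation shortcut the author is after.
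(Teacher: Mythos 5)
Your computation is correct: applying the projector $I-yy^\ast$ to the given form of $s$ kills the $y$-component, idempotence handles the second term, and the resulting ratio collapses to $\tau^2\|(I-yy^\ast)z\|^2$ exactly as you show. The paper states this lemma without any proof at all, so there is nothing to compare against; your direct verification is evidently the intended argument, and your remark about identifying the $2$-dimensional eigenvector of $B^\ast V^\ast(A-\theta I)^\ast(A-\theta I)VB$ with $s$ via the orthonormal columns of $B$ addresses the only point the statement leaves implicit.
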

So far, In this section we discussed how to compute $\tau$ and $\|(I-yy^\ast)z\|.$ In the following subsection, we report the numerical results.
\subsection{Numerical results}
The numerical experiments have been conducted on many benchmark matrices from the Matrix Market Website. Here we report only two examples as all the experiments validated the theory in the previous sections. All the experiments have been conducted on Intel core i7 processor using MATLAB-R2016(b) with $eps=2.2204e-16.$ 
\begin{eg}\label{OBV-JD}\relax
In this example we used the Jacobi-Davidson method without restarting to compute right most
eigenvalues of the matrix $OLM5000.$ For details of the matrix; See Matrix Market Website. The initial vector has all its entries equal to $\frac{1}{\sqrt{n}},$ where $n$ is the order of the matrix. An eigenvector approximation in the LLS method is used in the correction equation. It solved approximately by using  20 iterations of un-restarted GMRES
method. In GMRES, we took the zero vector as an initial
approximation to the solution of the correction equation.

At each iteration of the Jacobi-Davidson method, we compute refined Ritz vector also and compare its residual norm with those in the Rayliegh-Ritz and LLS methods in accordance with the Lemma-\ref{flemsv}.
\end{eg}

In this example we fixed the size of a search subspace in the Jacobi-Davidson method to $200.$ It is well known that $\tau=1$ in the first iteration as the search subspace contains only initial vector.
\begin{figure}[!htb]
\begin{center}
\hfill \includegraphics[width = 5.5in,height=1.8in]{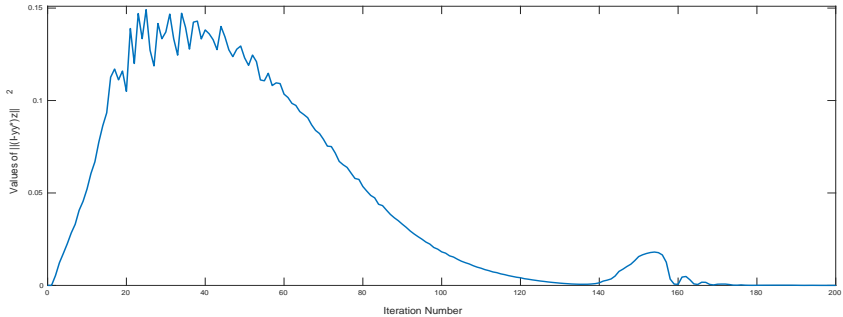} 
\end{center}\vs{-0.3cm}
\caption{Iteration numbers versus $\|(I-yy^\ast)z\|^2$}
\label{fig:1}
\end{figure}\vs{1cm}

\ni The Figure-\ref{fig:1} depicts the curves of 
%$\tau$ and 
$\|(I-yy^\ast)z\|^2$ against the iteration number. 
%The Figure- compares $K/\sigma^2$ with respect to its upper and lower bounds in the Lemma $1+14\|(I-yy^\ast)z\|^2$ and $1+\tau\|(I-yy^\ast)z\|^2$ respectively.
It is clear from the figure that as the iteration number grows, $\|(I-yy^\ast)z\|^2$ decreases. We found that from the $150^{th}$ iteration onwards its value is below $\mathcal{O}(10^{-5}).$ Thus, the Figure-\ref{fig:1} confirms the well known fact that near the convergence, eigenvector approximations in the refined method and Rayleigh-Ritz projection method almost coincide.
% The maximum value of $\|(I-yy^\ast)z\|^2$ observed is $0.15.$
\begin{figure}[!htb]
\begin{center}
\hfill \includegraphics[width = 5.5in,height=1.8in]{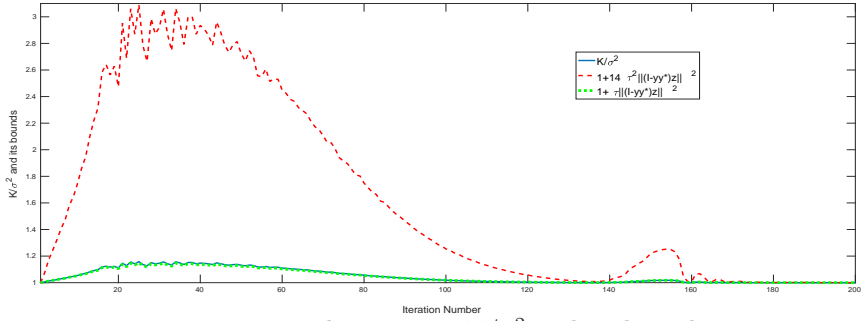} 
\end{center}\vs{-0.4cm}
\caption{Iteration numbers versus $K/\sigma^2$ and its bounds}
\label{fig:2}
\end{figure}

\ni The Figure-\ref{fig:2} shows $K/\sigma^2$ and its bounds in the Lemma-\ref{flemsv} against iteration number. Recall that $\sigma$ is residual norm in the refined projection method which is minimum over all unit vectors in the entire search subspace. From the figure it is easy to see that  $K/\sigma^2$ lies in the interval $(1,1.2),$ that means $\sigma^2 >(0.8)K.$  Here, $K=\|(A-\theta I)V(y+(I-yy^\ast)z\|^2,$ a residual norm of non-normalized vector $Vy+V(I-yy^\ast)z.$ Therefore, normalized residual norm of this vector will be much closer to residual norm in the refined projection.

\begin{figure}[!htb]
\begin{center}
\hfill \includegraphics[width = 5.5in,height=1.8in]{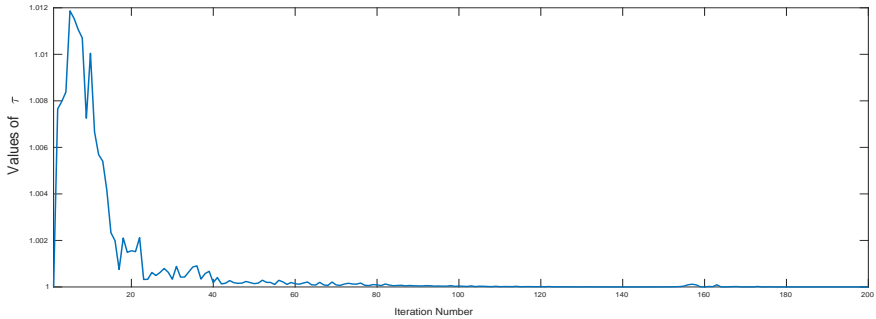} 
\end{center}\vs{-0.4cm}
\caption{Iteration numbers versus $\tau$}
\label{tau1f}
\end{figure}\vs{0.2cm}

\ni The Figure-\ref{tau1f} shows values of $\tau$ against iteration number. 
%It is found that $\tau=1$ after  
Observe from the figure  that  $\tau$ is in the interval $(1,1.013).$ However, we do not have any theoretical evidence on  a real number upper bound of $\tau.$ Thus, from this figure and the equations (\ref{E4}) and (\ref{E8}) it is evident that the line search technique brings only a marginal reduction in the residual norm obtained only with the least squares heuristics. \vs{0.1cm}

\begin{eg}\label{eg2}\relax
In this example the matrix is  $DW2048$ from the Matrix Market. We use the Arnoldi method with LLS to compute the eigenvalue with largest real  part. The initial vector chosen as $ones(2048,1).$ Since search subspace updataion in the Arnoldi method doesn't require eigenvector approximation like the Jacobi-Davidson method, we tested our theoretical results with explicitly restarting Arnoldi method. In restarting Arnoldi method the size of a Krylov subspace is fixed to $10$ for this example. However, the same scenario that we present here is observed with subspaces of larger size. 
At the end of each restart, an initial vector updated by an eigenvector approximation at hand obtained using the LLS method. 
% we compute $K$ by using the equation (3.10) in \cite{Someshwaragift2}.
\end{eg}

\begin{figure}[!htb]
\begin{center}
\hfill \includegraphics[width = 5.5in,height=1.8in]{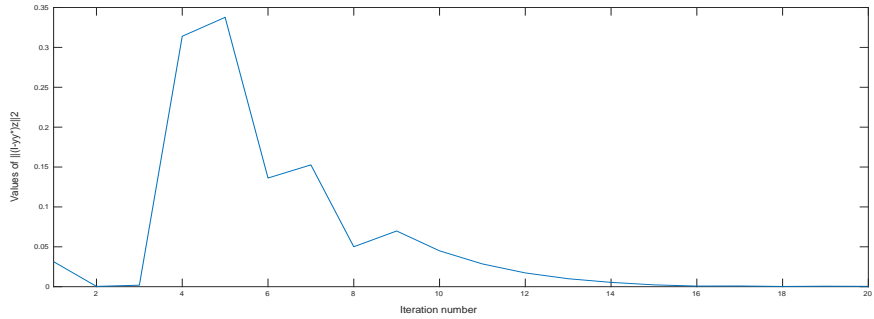} 
\end{center}\vs{-0.3cm}
\caption{Iteration numbers versus $\|(I-yy^\ast)z\|^2$}
\label{fig:4}
\end{figure}
\begin{figure}[!htb]
\begin{center}
\hfill \includegraphics[width = 5.5in,height=1.8in]{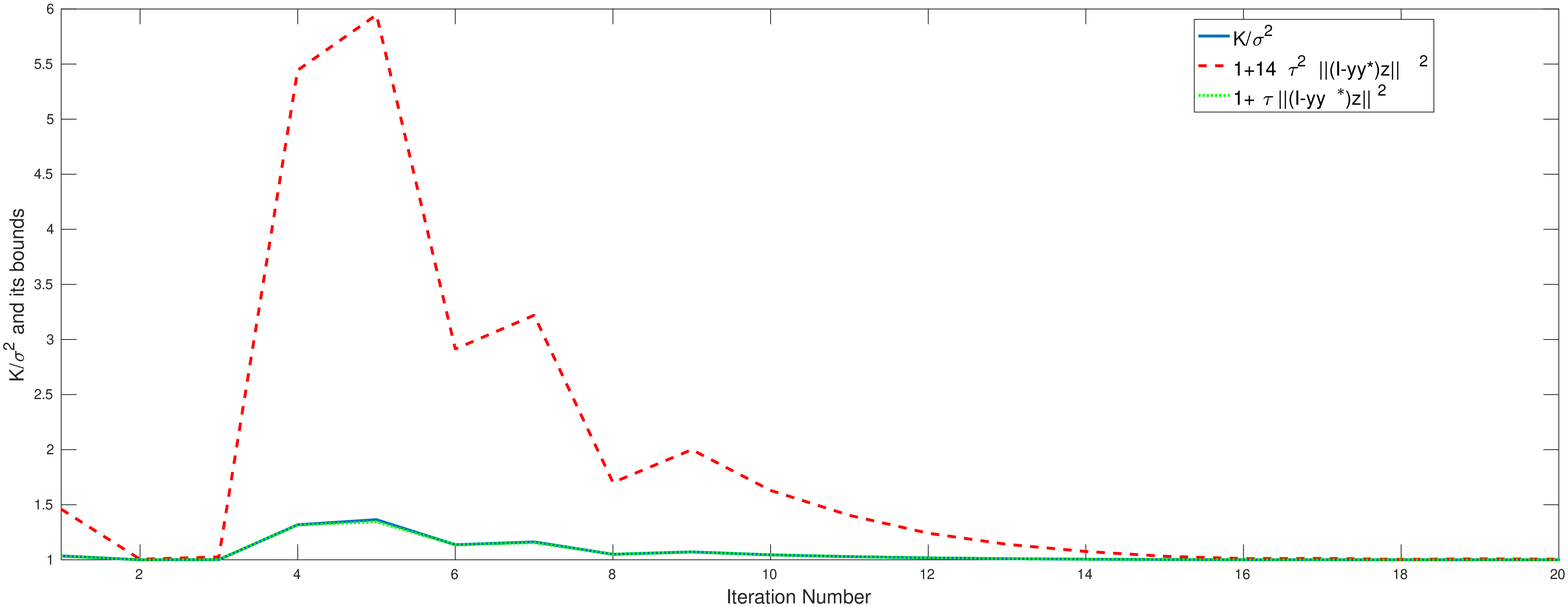} 
\end{center}\vs{-0.4cm}
\caption{Iteration numbers versus $K/\sigma^2$ and its bounds}
\label{fig:5}
\end{figure}
\begin{figure}[!htb]
\begin{center}
\hfill \includegraphics[width = 5.5in,height=1.8in]{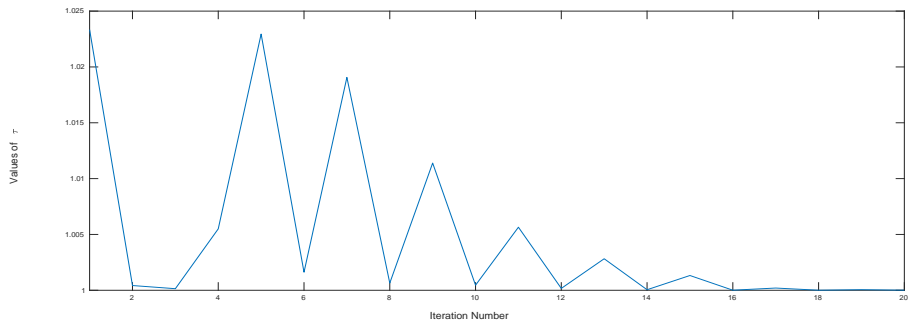} 
\end{center}\vs{-0.4cm}
\caption{Iteration numbers versus $\tau$}
\label{tauf2}
\end{figure}

\vs{0.2cm}The Figures-\ref{fig:4}, \ref{fig:5}, and \ref{tauf2} shows the curves for
 $\|(I-yy^\ast)z\|^2,$ $K/\sigma^2,$ and $\tau$  against restart number respectively. Observe from the Figure-\ref{fig:4} that norm of $(I-yy^\ast)z$ recedes near to zero as the restart number grows. Thus, by using the Lemma-\ref{flemsv} note that upper and lower bounds for $K/\sigma^2$ nearly coincide when the restart number is larger as $\tau$ is finite. The Figure-\ref{fig:5} demonstrate this fact.  
As in the previous example, It has been observed from the Figure-\ref{tauf2} that $\tau < 1.025.$   However, a theoretical result that gives a real number upper bound for $\tau$  has yet to be found.
%gives a real number upper bound for $\tau$is to find yet.

\section{Conclusions}
In this paper, bounds for a ratio of residual norms in the refined and Rayleigh-Ritz projections have been
derived. These bounds are in terms of $\|(I-yy^\ast)z\|^2.$ and$\tau,$ a scalar in the line search and least sqaures method; See equation (\ref{E3}).  Here, $Vy$ is an eigenvector approximation in the Rayleigh-Ritz projection method and $z$ is a solution vector of a least squares problem in the equation (\ref{E2}).

Moreover, the bounds that are derived  in this paper are different from the relationships between the above mentioned residuals which have already been studied by Z. Jia; see Section 4 in Z. Jia ``Some theoretical comparisons of refined Ritz vectors and Ritz vectors'', Science in China Ser. A Mathematics 2004 Vol.47 Supp. 222-233. 
 In this reference, the relationships between the above-mentioned residuals are in terms of the angle between refined Ritz vector and Ritz vector and the second smallest singular value of a singular value problem in the refined method. Thus, computing those bounds practically requires the computation of a Ritz vector, refined Ritz vector, the angle between them and the second smallest singular value. It is very costly to compute all these quantities. Thus, these relations are useful only \underline{theoretically} since once refined Ritz vector and Ritz vectors are computed, practically there is no requirement of  computing the second smallest singular value to compare the   residual norms in both the methods. 

The bounds derived in this article for the ratio of residual norms in the Rayleigh-Ritz and the refined projection methods are \underline{practically useful.} These bounds predicts how much smaller the residual norm in refined projection method compared to residual norm in the Rayleigh-Ritz method, without computing the refined Ritz vector.


\begin{thebibliography}{99}
\bibitem{feng} S. Feng and Z. Jia, A Refined Jacobi-Davidson method and its correction equation, {\em Computers and Mathematics with applications}, 49, 417-427, 2005.
\bibitem{jiac} Z. Jia, The convergence of generalized Lanczos methods for large unsymmetric eigenproblems,  {\em SIAM J. Matrix. Anal. Appl.}, 16:3, 843-862, 1995.
\bibitem{HP1} R.B. Morgan and M. Zeng, Harmonic projection methods for large non-symmetric eigenvalue
problems, \textit{Numer. Linear algebra Appl.}, 5:1, 33-55, 1998.
\bibitem{someshwaragift} M. Ravibabu and A. Singh, On Refined Ritz vectors and polynomial characterization,  {\em Comp. and Math. with Appl.,} 67, 1057-1064, 2014.
\bibitem{Someshwaragift2} M. Ravibabu and A.Singh, A new variant of Arnoldi method for approximation of eigenpairs, \textit{Journal of Computational and Applied Mathematics,} 344, 424-437. https://doi.org/10.1016/j.cam.2018.05.047
\bibitem{Mallannagift} M. Ravibabu and A. Singh, A least squares and line search variant of the Jacobi-Davidson method, \textit{Under communication}.
%\bibitem{KVSLNSMKVB} M.Ravibabu, Computational bounds for the ratio of residual norms in Rayleigh-Ritz and refined projection methods:II,  \textit{Under communication}.
\bibitem{saad} Y. Saad, {\em Numerical Methods for Large Eigenvalue Problems}, Second Edition, SIAM, 2001.
\bibitem{slei} G.L. G. Sleijpen and H. A. Van der Vorst, A Jacobi-Davidson iteration method for linear eigenvalue problems, {\em SIAM REVIEW}, 42:2, 267-293, 2000.
\bibitem{ste}  G.W. Stewart, {\em Matrix Algorithms: Vol II, eigensystems}, SIAM, Philadelphia, PA, 2001.
\bibitem{HPrec} G. Wu, The Convergence of Harmonic Ritz Vectors and Harmonic Ritz Values,
Revisited, \textit{SIAM. J. Matrix Anal. Appl.}, 38(1), 118-133,
2017.
\end{thebibliography}
\end{document}